\newdimen\bibspace
\numberwithin{equation}{section}
\newtheorem{theorem}{Theorem}[section]
\newtheorem{lemma}[theorem]{Lemma}
\newtheorem{proposition}[theorem]{Proposition}
\newtheorem{remark}[theorem]{Remark}
\def\<{\langle}
\def\>{\rangle}
\def\XXint#1#2#3{{\setbox0=\hbox{$#1{#2#3}{\int}$ }
\vcenter{\hbox{$#2#3$ }}\kern-.6\wd0}}
\begin{document}

\title{Asymptotic expansion of 2-dimensional gradient graph with vanishing mean curvature at infinity}

\author{Zixiao Liu and Jiguang Bao\footnote{
 Supported by the National Key Research and Development Program of China (No. 2020YFA0712904) and
 National Natural Science Foundation of China (No. 11871102 and No. 11631002).}}
\date{\today}

\maketitle

\begin{abstract}
In this paper, we establish the asymptotic expansion at infinity of gradient graph in dimension 2 with vanishing mean curvature at infinity. This corresponds to  our previous results in higher dimensions and generalizes the results for minimal gradient graph on exterior domain in dimension 2. Different from the strategies for higher dimensions, instead of the equivalence of Green's function on unbounded domains,
we apply a version of iteration methods from Bao--Li--Zhang [Calc.Var PDE, 52(2015), pp. 39-63] that is refined by spherical harmonic expansions to provide a more explicit asymptotic behavior than known results.
 \\[1mm]
 {\textbf{Keywords:}} Monge--Amp\`ere equation, Mean curvature equation, Asymptotic behavior, Spherical harmonic expansion.
 \\[1mm]
 {\textbf{MSC 2020:}} 35J60, 35C20, 35B20.
\end{abstract}

\section{Introduction}

We consider the asymptotic expansion at infinity of solutions to a family of mean curvature equations of gradient graph in dimension $2$.

Let  $(x,Du(x))$ denote the gradient graph of $u$ in  $(\mathbb R^n\times\mathbb R^n, g_{\tau})$, where $Du$ denotes the gradient of scalar function $u$ and

\begin{equation*}
g_{\tau}=\sin \tau \delta_{0}+\cos \tau g_{0}, \quad \tau \in\left[0, \frac{\pi}{2}\right]
\end{equation*}
is the linearly combined metric of standard Euclidean metric

\begin{equation*}
\delta_{0}=\sum_{i=1}^{n} d x_{i} \otimes d x_{i}+\sum_{j=1}^{n} d y_{j} \otimes d y_{j},
\end{equation*}
with the pseudo-Euclidean metric

\begin{equation*}
g_{0}=\sum_{i=1}^{n} d x_{i} \otimes d y_{i}+ \sum_{j=1}^{n} d y_{j} \otimes d x_{j}.
\end{equation*}
As proved in \cite{Chong-Rongli-Bao-SecondBoundary-SPL},  if $u\in C^2(\mathbb R^n)$ is a classical solution of
\begin{equation}\label{Equ:intro}
F_{\tau}(\lambda(D^2u))=f(x),
\end{equation}
then $Df(x)$ is the mean curvature of gradient graph $(x,Du(x))$ in $\left(\mathbb{R}^{n} \times \mathbb{R}^{n}, g_{\tau}\right)$.
In \eqref{Equ:intro}, $f(x)$ is a sufficiently regular function,  $\lambda\left(D^{2} u\right)=\left(\lambda_{1}, \lambda_{2}, \cdots, \lambda_{n}\right)$ is the vector formed by $n$ eigenvalues of Hessian matrix $D^{2} u$ and

$$
F_{\tau}(\lambda):=\left\{
\begin{array}{ccc}
\displaystyle  \frac{1}{n} \sum_{i=1}^{n} \ln \lambda_{i}, & \tau=0,\\
\displaystyle  \frac{\sqrt{a^{2}+1}}{2 b} \sum_{i=1}^{n} \ln \frac{\lambda_{i}+a-b}{\lambda_{i}+a+b},
  & 0<\tau<\frac{\pi}{4},\\
  \displaystyle-\sqrt{2} \sum_{i=1}^{n} \frac{1}{1+\lambda_{i}}, & \tau=\frac{\pi}{4},\\
  \displaystyle\frac{\sqrt{a^{2}+1}}{b} \sum_{i=1}^{n} \arctan \displaystyle\frac{\lambda_{i}+a-b}{\lambda_{i}+a+b}, &
  \frac{\pi}{4}<\tau<\frac{\pi}{2},\\
  \displaystyle\sum_{i=1}^{n} \arctan \lambda_{i}, & \tau=\frac{\pi}{2},\\
\end{array}
\right.
$$
$a=\cot \tau, b=\sqrt{\left|\cot ^{2} \tau-1\right|}$.

When $\tau=0$, Eq.~\eqref{Equ:intro} becomes the Monge--Amp\`ere type equation

$$
\det D^2u=e^{nf(x)}.
$$
When $\tau=\frac{\pi}{4}$, Eq.~\eqref{Equ:intro} can be translated into the inverse harmonic Hessian equation

$$
-\sqrt 2\sum_{i=1}^n\dfrac{1}{\lambda_i}=f(x),
$$
which is a special form of the quadratic Hessian equations $\frac{\sigma_k(\lambda)}{\sigma_l(\lambda)}=f(x)$, where $\sigma_k(\lambda)$ with $k=1,2,\cdots,n$ denotes the $k$-th elementary symmetric function of $\lambda(D^2u)$.\\
\noindent When $\tau=\frac{\pi}{2}$, Eq.~\eqref{Equ:intro} becomes the Lagrangian mean curvature type equation
\begin{equation}\label{equ:SPL}
\sum_{i=1}^{n} \arctan \lambda_{i}\left(D^{2} u\right)=f(x).
\end{equation}
Especially when $f(x)$ is a constant, the Lagrangian mean curvature equation above is also known as the special Lagrangian equation.

For $f(x)$ being a constant $C_0$, Warren \cite{Warren-Calibrations-MA} proved Bernstein-type results of \eqref{Equ:intro} based on the results of J\"orgens \cite{Jorgens}--Calabi \cite{Calabi}--Pogorelov \cite{Pogorelov}, Flanders \cite{Flanders} and Yuan \cite{Yuan-Bernstein-SPL,Yuan-GlobalSolution-SPL}, which state that any classical solution with under suitable semi-convex conditions must be a quadratic. Especially when $\tau=0$, there are different proofs and extensions of the Bernstein-type results of Monge--Amp\`ere equations by Cheng--Yau \cite{ChengandYau}, Caffarelli \cite{Caffarelli-InteriorEstimates-MA}, Jost--Xin \cite{JostandXin}, Fu \cite{Fu-Bernstein-SPL}, Li--Xu--Simon--Jia \cite{Book-Li-Xu-Simon-Jia-MA}, etc. When $\tau=\frac{\pi}{4}$, there are Bernstein-type results of Hessian quotient equations by Bao--Chen--Guan--Ji \cite{Bao-Chen-Guan-Ji--HessianQuotient}. For generalizations of Bernstein-type results of Hessian and Hessian quotient equations, we refer to Chang--Yuan \cite{Chang-Yuan-LiouvillSigma2}, Chen--Xiang \cite{Chen-Xiang-Rigidity2Hessian},
Li--Ren--Wang \cite{Li-Ren-Wang-InteriRigidty}, Yuan \cite{Yuan-Bernstein-SPL}, Du \cite{Du-NecessaryandSufficient-HessianQuotient} etc.
When $\tau=\frac{\pi}{2}$, though the mean curvature relies only on $Df$, Yuan \cite{Yuan-GlobalSolution-SPL} reveals the importance of the value of phase $C_0$. The phase (also known as the Lagrangian angle) $\frac{n-2}{2}\pi$ is called critical since the level set

$$
\left\{\lambda\in\mathbb{R}^n~|~\sum_{i=1}^n\arctan\lambda_i=C_0\right\}
$$
is convex only when $|C_0|\geq \frac{n-2}{2}\pi$. Another way to obtain a convexity/concaviety structure is to restrict in the range of $D^2u>0$ as in Yuan \cite{Yuan-Bernstein-SPL}.
For further relevant discussions, we refer to
Warren--Yuan \cite{Warren-Yuan-HessianEstimate-Sigma2,Warren-Yuan-LargePhase}, Wang--Yuan \cite{Wang-Yuan-SuperCritical},
Chen--Shankar--Yuan \cite{Chen-Shankar-Yuan-RegularitySPL},
 Li--Li--Yuan \cite{Li-Li-Yuan-BernsteinThm}, Bhattacharya--Shankar \cite{Bhattacharya-Shankar-OptimalRegularity-LagMeanCur,Bhattacharya-Shankar-RegularityLagrangiaMC},
Bhattacharya \cite{Bhattacharya-HessianEstiamte-LagMeanCur} and the references therein.

For $f(x)-C_0$ having compact support and $n\geq 3$, there are exterior Bernstein type results of \eqref{Equ:intro} by the authors \cite{Liu-Bao-2020-ExpansionSPL}, which state that any classical solution with the same semi-convex conditions must be asymptotic to quadratic  polynomial at infinity, together with higher order expansions that give the precise gap between exterior minimal gradient graph and the entire case.
For $f(x)-C_0$ having compact support and $n=2$, the authors \cite{Liu-Bao-2021-Dim2} proved similar exterior Bernstein type results of \eqref{Equ:intro}, which imply that exterior solutions are asymptotic to quadratic polynomial with additional $\ln$-term at infinity. When $\tau=0$, such results were partially proved earlier for Monge--Amp\`ere equations by Caffarelli--Li \cite{Caffarelli-Li-ExtensionJCP} and Hong \cite{Hong-Remark-MA}.
When $\tau=\frac{\pi}{2}$, such  results were partially  proved earlier for special Lagrangian equations by Li--Li--Yuan \cite{Li-Li-Yuan-BernsteinThm}. The refined asymptotic expansions in our earlier results
\cite{Liu-Bao-2020-ExpansionSPL,Liu-Bao-2021-Dim2} are new
 even for $\tau=0$ and $\frac{\pi}{2}$ cases, it reveals that the gap between exterior minimal gradient graph and the entire case can be written into  higher order errors.

For $f(x)-C_0$ vanishing at infinity,  $n\geq 3$ and $\tau\in[0,\frac{\pi}{4}]$,  there are  exterior Bernstein-type results of \eqref{Equ:intro}
by the authors \cite{Liu-Bao-2021-Expansion-LagMeanC}, which provide both the asymptotic behavior and finer expansions of error terms. Especially when $\tau=0$, the asymptotic behavior result of solutions to Monge--Amp\`ere type equations were proved under stronger assumptions on $f(x)$ by Bao--Li--Zhang \cite{Bao-Li-Zhang-ExteriorBerns-MA}.

Such asymptotic expansion  for solutions of geometric curvature equations were earlier introduced in Han--Li--Li \cite{Han-Li-Li-AsymExpan-Yamabe}, which refines the previous study on the Yamabe equation and the $\sigma_k$-Yamabe equations by Caffarelli--Gidas--Spruck \cite{Caffarelli-Gidas-Spruck-CGS}, Korevaar--Mazzeo--Pacard--Schoen \cite{KMPS-RefinedAsymptotics}, Han--Li--Teixeira \cite{Han-Li-Teixeira-AsymBeha-kYamabe}, etc.  We would also like to mention that for the Monge--Amp\`ere type equations, there are also classification results and asymptotic behavior analysis for $f(x)-C_0$ being a periodic function by Caffarelli--Li \cite{Caffarelli-Li-Liouville-MA-periodic} or asymptotically periodic at infinity by Teixeira--Zhang \cite{Teixeira-Zhang-Liouville-MA-AsymptoticPeriodic}, etc. Under additional assumptions on $D^2u$ at infinity, the asymptotic behavior results were obtained for general fully nonlinear elliptic equations by
Jia \cite{Jia-Xiaobiao-AsymGeneralFully}.

In this paper, we consider the asymptotic behavior and further
expansions or error terms
at infinity of solutions to \eqref{Equ:intro} with $f(x)-C_0$ vanishing at infinity and $n=2$. Some special structures in $n=2$ case enable us to deal with all $\tau\in[0,\frac{\pi}{2}]$, which is different from $n\geq 3$ case in \cite{Liu-Bao-2021-Expansion-LagMeanC}.  However, there are also disadvantages caused by $n=2$, especially the lack of equivalence on the Green's function on unbounded domain.
The asymptotic behavior obtained here is a refinement of known results of the Monge--Amp\`ere equations by Bao--Li--Zhang \cite{Bao-Li-Zhang-ExteriorBerns-MA}.

Consider classical solutions of
\begin{equation}\label{Equ-SPL-perturb}
  F_{\tau}\left(\lambda\left(D^{2} u\right)\right)=f(x)\quad\text{in }\mathbb R^2,
\end{equation}
where $f(x)\in C^m(\mathbb R^2)$ converge to some constant $f(\infty)$ in the sense of
\begin{equation}\label{Low-Regular-Condition}
  \limsup _ { | x | \rightarrow \infty } | x | ^ {\zeta+k} | D^k( f ( x ) - f ( \infty ) ) | < \infty,\quad\forall~ k=0,1,2,\cdots,m
\end{equation}
for some $\zeta>2$ and $m\geq 3$.

From the definition of $F_{\tau}$ operator, $\lambda_i$ must satisfy

$$
\left\{
\begin{array}{llll}
  \lambda_i>0, & \text{for }\tau=0,\\
  \frac{\lambda_i+a-b}{\lambda_i+a+b}>0, & \text{for }\tau\in(0,\frac{\pi}{4}),\\
  \lambda_i\not=-1, & \text{for }\tau=\frac{\pi}{4},\\
  \lambda_i+a+b\not=0, & \text{for }\tau\in(\frac{\pi}{4},\frac{\pi}{2}),\\
\end{array}
\right.
$$
for $i=1,2$. Thus we separate the solution into semi-convex and semi-concave cases.
For simplicity, we consider the semi-convex case
\begin{equation}\label{equ:cond:semi-convex}
  A>\left\{
  \begin{array}{llll}
    0, & \tau=0,\\
    -(a-b)I, & \tau \in\left(0, \frac{\pi}{4}\right),\\
    -I, & \tau=\frac{\pi}{4},\\
    -(a+b)I, & \tau \in\left(\frac{\pi}{4}, \frac{\pi}{2}\right),\\
    -\infty, & \tau=\frac{\pi}{2},\\
  \end{array}
  \right.
\end{equation}
where $I$ denotes the 2-by-2 identity matrix and the semi-concave case can be treated similarly.

Hereinafter, we assume $D^2u$ satisfy \eqref{equ:cond:semi-convex} in $\mathbb R^2$,
\begin{equation}\label{equ:cond:fInfinity}
f(\infty)\not=0\quad\text{for }\tau=\frac{\pi}{2}\quad
\text{and }
f(\infty)\not\in\partial\left\{
F_{\tau}(\lambda(A))~|~A \text{ satisfies }\eqref{equ:cond:semi-convex}
  \right\},
\end{equation}
 where the notation $\partial$ denote the boundary of a set in $\mathbb R$.
We may assume further without loss of generality that $f(\infty)>0$ for $\tau=\frac{\pi}{2}$ case, otherwise consider $-u$ instead.

\begin{remark}
  In condition \eqref{equ:cond:fInfinity}, our major additional restriction is
$f(\infty)\not=0$ for $\tau=\frac{\pi}{2}.$
  It corresponds to the critical phase in $\mathbb R^2$, which leads to a different phenomenon than supercritical case.

  If $\tau\in(0,\frac{\pi}{4}]$ and  $\displaystyle f(\infty)=\sup\{F_{\tau}(A)~|~A\text{ satisfies }\eqref{equ:cond:semi-convex}\}=0$, then by the structure of $F_{\tau}(\lambda)$, we have

  $$
  \lambda_1(D^2u(x)), \lambda_2(D^2u(x))\rightarrow+\infty\quad\text{as }|x|\rightarrow \infty.
  $$
  These are not the asymptotic behavior under discussion and hence we rule out these situations by \eqref{equ:cond:fInfinity}.
\end{remark}

Let $\mathtt{Sym}(2)$ denote the set of 2-by-2 symmetric matrix and $x^T$ denote the transpose of a vector $x\in\mathbb R^2$. We
 say a scalar function $\varphi=O_{l}\left(|x|^{-k_{1}}(\ln |x|)^{k_{2}}\right)$ with $l \in \mathbb{N}, k_{1}, k_{2} \geq 0$ if it satisfies

\begin{equation*}
\left|D^{k} \varphi\right|=O\left(|x|^{-k_{1}-k}(\ln |x|)^{k_{2}}\right) \quad \text {as }|x| \rightarrow\infty
\end{equation*}
for all $k=0,1,2,\cdots,l$.

Our main result shows the following asymptotic behavior and expansion result at infinity.
\begin{theorem}\label{thm:perturb-dim2}
  Let $u\in C^2(\mathbb R^2)$ be a classical solution of \eqref{Equ-SPL-perturb} with $D^2u$ satisfying \eqref{equ:cond:semi-convex} and $f\in C^m(\mathbb R^2)$ satisfy \eqref{Low-Regular-Condition}, \eqref{equ:cond:fInfinity} for some $\zeta>2$ and $m\geq 3$.
Assume further that
\begin{equation}\label{equ:cond:quadratic-growth}
   u(x)\leq C(1+|x|^2)\quad\text{in }\mathbb R^2
\end{equation}
for some $C>0$. Then there exist $A\in\mathtt{Sym}(2)$ satisfying $F_{\tau}(\lambda(A))=f(\infty)$ and \eqref{equ:cond:semi-convex}, $b\in\mathbb R^2, c,d\in\mathbb R$
 such that
\begin{equation}\label{equ:asymptotic-behav}
\begin{array}{lllll}
&\displaystyle u(x)-\left(\frac{1}{2} x^{T} A x+b x+c\right)-d \ln \left(x^{T} P x\right)\\
= & \displaystyle
\left\{
\begin{array}{lllll}
O_{m+1}\left(|x|^{2-\min\{\zeta,3\}}\right), & \text{if }\zeta\not=3,\\
O_{m+1}\left(|x|^{-1}(\ln|x|)\right), & \text{if }\zeta=3,\\
\end{array}
\right.\\
\end{array}
\end{equation}
as $|x|\rightarrow\infty$, where the matrix $P$ is given by
\begin{equation}\label{equ:def-matrixQ}
  P=(DF_{\tau}(\lambda(A)))^{-1}=\frac{1}{2}\left(\sin\tau A^2+2\cos\tau A+\sin\tau I\right).
\end{equation}
Furthermore, when $\zeta>3$, there  also exist $d_1,d_2\in\mathbb R$ such that
\begin{equation}\label{equ:asymptotic-expan}
  \begin{array}{llll}
  &\displaystyle u(x)-\left(\frac{1}{2}x^TAx+\beta x+c\right)- d\ln (x^TPx)\\
  =&\displaystyle (x^TPx)^{-\frac{1}{2}}(d_1\cos\theta+d_2\sin\theta)+\left\{
  \begin{array}{llll}
  O_m(|x|^{2-\zeta}), & \text{if }\zeta<4,\\
  O_m(|x|^{-2}(\ln|x|)), & \text{if }\zeta\geq 4,\\
  \end{array}
  \right.\\
  \end{array}
  \end{equation}
as $|x|\rightarrow\infty$, where $\theta=\frac{P^{\frac{1}{2}}x}{(x^TPx)^{\frac{1}{2}}}$.
\end{theorem}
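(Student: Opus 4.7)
The plan is to linearize \eqref{Equ-SPL-perturb} around the asymptotic quadratic profile $\tfrac{1}{2}x^T A x$, convert the resulting PDE into a perturbed Poisson equation $\Delta_y w=g$ in the ellipsoidal coordinates $y=P^{1/2}x$, and then extract the asymptotic behaviour via two-dimensional Newtonian potential theory combined with an iterative spherical-harmonic expansion in the spirit of Bao--Li--Zhang \cite{Bao-Li-Zhang-ExteriorBerns-MA}. As a preliminary step, combining the semi-convex condition \eqref{equ:cond:semi-convex}, the quadratic growth \eqref{equ:cond:quadratic-growth}, and the non-degenerate phase condition \eqref{equ:cond:fInfinity} (via the exterior Bernstein / barrier machinery of \cite{Liu-Bao-2021-Dim2}, adapted from compactly supported to vanishing perturbations), one extracts a symmetric matrix $A$ with $F_\tau(\lambda(A))=f(\infty)$ such that $D^2u\to A$ at infinity, reducing the problem to the study of $w:=u-\tfrac{1}{2}x^TAx-bx-c$ for constants $b,c$ to be determined later.

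\textbf{Linearization and the leading expansion.} Writing \eqref{Equ-SPL-perturb} in integrated form
\[
\int_0^1 DF_\tau\bigl(\lambda(A+sD^2w)\bigr)\,ds \cdot D^2w = f(x)-f(\infty),
\]
the coefficient matrix in front of $D^2w$ tends to $DF_\tau(\lambda(A))$, whose inverse (up to a positive scalar) is the positive definite symmetric matrix $P$ of \eqref{equ:def-matrixQ}; positivity is guaranteed precisely by \eqref{equ:cond:fInfinity}. Under the affine change of coordinates $y=P^{1/2}x$, which turns $x^TPx$ into $|y|^2$ and the principal operator into a constant multiple of $\Delta_y$, the function $w$ satisfies a perturbed Poisson equation $\Delta_y w=g(y)$; by \eqref{Low-Regular-Condition} and the already-established decay of $D^2w$, one has $|D^k g(y)|\lesssim |y|^{-\zeta-k}$ for $k\le m$, while the quadratic nonlinear remainder $Q[D^2w]$ decays strictly faster at each iteration stage. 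The two-dimensional Newtonian potential of $g$ then produces a leading logarithmic term $d\ln|y|=\tfrac{d}{2}\ln(x^TPx)$ proportional to the total mass $\tfrac{1}{2\pi}\int g$, together with a remainder of order $|y|^{2-\min\{\zeta,3\}}$ in general (or $|y|^{-1}\ln|y|$ at the borderline $\zeta=3$). Absorbing the ambient affine harmonic corrections into $b$ and $c$ yields \eqref{equ:asymptotic-behav}, with derivative bounds of order $m+1$ obtained from interior Schauder and Calder\'on--Zygmund estimates applied on dyadic annuli.

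\textbf{Refinement via spherical harmonics and the main obstacle.} For $\zeta>3$, the leading remainder decays strictly faster than $|y|^{-1}$, so I would decompose $g$ and $w$ into spherical harmonics
\[
w(y)=\alpha_0(r)+\sum_{k\ge 1}\bigl(\alpha_k(r)\cos k\theta+\beta_k(r)\sin k\theta\bigr).
\]
The $k=0$ mode reproduces the logarithm, the $k=1$ mode generates exactly the term $(x^TPx)^{-1/2}(d_1\cos\theta+d_2\sin\theta)$, and a second iteration, feeding the improved expansion back into $Q[D^2w]$, produces the sharper errors $O_m(|x|^{2-\zeta})$ for $\zeta<4$ and $O_m(|x|^{-2}\ln|x|)$ for $\zeta\ge 4$, matching \eqref{equ:asymptotic-expan}. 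The main obstacle, explicitly flagged in the introduction, is the absence of a clean Green's-function equivalence on exterior domains in dimension two: the logarithmic Newtonian kernel destroys the power-law iteration available for $n\ge 3$ in \cite{Liu-Bao-2021-Expansion-LagMeanC} and is responsible for the borderline exponents $\zeta=3$ and $\zeta=4$ picking up genuine $\ln|x|$ factors. This is handled by the refined Bao--Li--Zhang annular iteration combined with spherical-harmonic projection to peel off the $k=0$ and $k=1$ modes exactly; ensuring that the nonlinear remainder $Q[D^2w]$ does not contaminate these leading modes is the technical crux of the argument.
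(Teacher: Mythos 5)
Your overall strategy — first extract the quadratic profile $\tfrac12 x^TAx$, then linearize, convert to a perturbed Poisson equation via $y=P^{1/2}x$ (equivalently $y=Q^{-1}x$ with $Q=[D_{M_{ij}}F_\tau(A)]^{1/2}$), and iterate a Bao--Li--Zhang-type improvement combined with spherical-harmonic mode extraction to peel off the $\ln|y|$ and $|y|^{-1}\cos\theta,|y|^{-1}\sin\theta$ terms — is the same as the paper's (Sections~2 and~4, culminating in Lemmas~\ref{Lem-fastConverge}, \ref{lem:Initial}, \ref{lem:iteration}). Your identification of the borderline exponents $\zeta=3,4$ as the source of genuine logarithmic corrections is also correct, and your observation that the crux is controlling the nonlinear remainder so that it does not contaminate the $k=0,1$ modes is accurate.

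However, there is a genuine gap in your preliminary step. You invoke ``the exterior Bernstein / barrier machinery of \cite{Liu-Bao-2021-Dim2}, adapted from compactly supported to vanishing perturbations'' to obtain $D^2u\to A$. The paper explicitly cautions (Remark after Theorem~\ref{thm:perturb-dim2}) that for a merely vanishing perturbation $f-f(\infty)$ one \emph{cannot} reduce the $\tau=\tfrac{\pi}{2}$ case to $\det D^2v=1$ or to a harmonic equation by the change of variable used in \cite{Liu-Bao-2021-Dim2}, and the same obstruction afflicts $\tau\in(\tfrac{\pi}{4},\tfrac{\pi}{2})$. What actually closes this step (Section~\ref{seclabel-WeakConvergence}) is a two-part argument that you do not supply: (i) a priori boundedness of $D^2u$ on all of $\mathbb R^2$, which for $\tau=\tfrac{\pi}{2}$ rests on Bhattacharya's interior Hessian estimate for Lagrangian mean-curvature equations and crucially uses the supercritical condition $f(\infty)\neq 0$ from \eqref{equ:cond:fInfinity} (it would fail at the critical phase); and (ii) passage to the algebraic form $\det D^2v=g$ with $g^{1/2}-(1+\cot^2 f(\infty))^{1/2}\in L^2$ so that the Caffarelli--Li/Bao--Li--Zhang Monge--Amp\`ere asymptotics (Theorem~\ref{thm:weakConverge-MA}) apply. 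For $\tau\in(\tfrac{\pi}{4},\tfrac{\pi}{2})$ there is moreover the delicate degenerate case $\tfrac{b}{\sqrt{a^2+1}}f(\infty)+\tfrac{\pi}{2}=0$, where the equation becomes a harmonic equation perturbed by $(1-\det D^2v)\tan(\cdot)$ and requires a separate scaling/Schauder argument. Without these ingredients, the assertion ``$D^2u\to A$'' that anchors your linearization is unjustified, and the whole iteration has no starting point. A secondary, smaller point: your leading log term is said to be ``proportional to the total mass $\tfrac{1}{2\pi}\int g$,'' but in the paper's construction (Lemma~\ref{Lem-fastConverge}) the particular solution is built mode-by-mode and the coefficient $d$ emerges from the $k=0$ spherical-harmonic mode of the residual harmonic function, not directly from the Newtonian potential of $g$ on the exterior domain — precisely because, as you yourself note, there is no good Green's-function representation on exterior planar domains.
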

\begin{remark}
  For $\tau=0$ case in Theorem \ref{thm:perturb-dim2}, condition \eqref{equ:cond:quadratic-growth} is not necessary.
\end{remark}

\begin{remark}
Theorem \ref{thm:perturb-dim2} generalizes the asymptotic expansion results of  previous work \cite{Liu-Bao-2021-Dim2} by the authors, where $f(x)$ being a constant since it corresponds to $\zeta=\infty$ and $m=\infty$ case in   \eqref{equ:asymptotic-behav} and \eqref{equ:asymptotic-expan}.
But there are many differences in argumentation methods.
By differentiating the equations we only obtain nonhomogeneous elliptic equations and inequalities on exterior domain. Furthermore, when $\frac{b}{\sqrt{a^2+1}}f(x)+\frac{\pi}{2}\equiv 0$ and $\tau\in(\frac{\pi}{4},\frac{\pi}{2})$, the equation can be translated into harmonic equations. But only if $\frac{b}{\sqrt{a^2+1}}f(\infty)+\frac{\pi}{2}=0$, it yields an additional perturbation term involving the second order derivatives of $u$. This leads to the difficult discussions as in \eqref{equ-temp-9}. For a similar reason, $\tau=\frac{\pi}{2}$ case cannot be deduced into the Monge--Amp\`ere equation $\det D^2v=1$ or harmonic equations by a simple change of variable as in \cite{Liu-Bao-2021-Dim2}. For these perturbed cases, we turn to study the algebraic form as in \eqref{equ:transformed} and apply iteration methods instead of using the asymptotic behavior of solutions to the Monge--Amp\`ere equations directly.
\end{remark}

\begin{remark}
As in the discussions in \cite{Bao-Li-Zhang-ExteriorBerns-MA,Liu-Bao-2020-ExpansionSPL,Liu-Bao-2021-Expansion-LagMeanC}
  etc., $\zeta>2$ in \eqref{Low-Regular-Condition} is optimal
  in the sense that for $\zeta=2$ we may construct radially symmetric solutions with $u=\frac{1}{2}x^TAx+O((\ln|x|)^2)$ as $|x|\rightarrow \infty$.
  Furthermore, the asymptotic expansion \eqref{equ:asymptotic-expan} is optimal in the sense that the next order term in \eqref{equ:asymptotic-expan} may contain error terms like $|x|^{-2}\ln|x|$, which cannot be represented into
  $(x^TPx)^{-1}(d_3\cos 2\theta+d_4\sin 2\theta)$ for some $d_3,d_4\in\mathbb R$.
\end{remark}

\begin{remark}
  By extension results as in Theorem 3.2 of \cite{Min-Extension-ConvexFunc},
  we may change the value of $u$ and $f$ on a dense subset without affecting the asymptotic behavior near infinity. Consequently by interior estimates as in  Lemma 17.16 of \cite{Book-Gilbarg-Trudinger},
  the regularity assumption on $f$ can be relaxed to $f\in C^0(\mathbb R^2)$ with $D^mf$ exists outside a compact subset of $\mathbb R^2$ for some $m\geq 3$.  Especially since $m\geq 3$, we may assume without loss of generality that $u\in C^{4}(\mathbb R^2)$.
\end{remark}

The paper is organized as follows.
In section \ref{seclabel-Poisson} we prove existence results for Poisson equations on exterior domain of $\mathbb R^2$.
 In section \ref{seclabel-WeakConvergence} we prove that  $u$ converge to a quadratic function $\frac{1}{2}x^TAx$ at infinity with a speed of $O(|x|^{2-\epsilon})$ for some $\epsilon>0$, which is similar to the strategy used in \cite{Caffarelli-Li-ExtensionJCP,Bao-Li-Zhang-ExteriorBerns-MA} etc. In section \ref{seclabel-AsyExpan} we prove Theorem \ref{thm:perturb-dim2} by iteration and spherical harmonic decomposition, based on the results in sections \ref{seclabel-Poisson} and \ref{seclabel-WeakConvergence}.

\section{Preliminary results on Poisson equations}\label{seclabel-Poisson}

In this section, we prove an existence result for Poisson equation on exterior domain.
\begin{lemma}\label{Lem-fastConverge}
  Let $g\in C^{\infty}(\mathbb R^2)$ satisfy
\begin{equation}\label{equ:vanishing-g-Lp}
\|g(r \cdot)\|_{L^{p}\left(\mathbb{S}^{1}\right)} \leq c_{0} r^{-k_{1}}(\ln r)^{k_{2}} \quad \forall ~r>1
\end{equation}
for some $c_0>0, k_1>0, k_2\geq 0$ and $p\geq 2$. Then there exists a smooth solution $v$ of
\begin{equation}\label{equ:Laplace}
  \Delta v=g\quad\text{in }\mathbb R^2\setminus\overline{B_1}
\end{equation}
such that
\begin{equation}\label{equ:FastVanishing}
|v(x)| \leq \left\{
\begin{array}{lllll}
Cc_0|x|^{2-k_1}(\ln|x|)^{k_2}, & \text{if }k_1\not\in\mathbb N_*,\\
Cc_0|x|^{2-k_1}(\ln|x|)^{k_2+1}, & \text{if }k_1\in\mathbb N_*\setminus\{2\},\\
Cc_0(\ln|x|)^{k_2+2}, & \text{if }k_1=2,\\
\end{array}
\right.
\end{equation}
for $|x|>1$ for some $C>0$.
\end{lemma}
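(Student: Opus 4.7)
The plan is to prove the lemma by a spherical harmonic (Fourier) decomposition in the angular variable on $\mathbb{S}^1$, solving each Fourier mode of the Poisson equation as a Cauchy--Euler ODE in $r = |x|$ and summing. Writing $x = r\omega$ with $\omega = (\cos\theta,\sin\theta)$, I expand
\begin{equation*}
g(r\omega) = \frac{a_0(r)}{2} + \sum_{k\geq 1}\bigl(a_k(r)\cos k\theta + b_k(r)\sin k\theta\bigr),\qquad
v(r\omega) = \frac{V_0(r)}{2} + \sum_{k\geq 1}\bigl(A_k(r)\cos k\theta + B_k(r)\sin k\theta\bigr).
\end{equation*}
From $\|g(r\cdot)\|_{L^p(\mathbb{S}^1)}\leq c_0 r^{-k_1}(\ln r)^{k_2}$ and H\"older's inequality on $\mathbb{S}^1$ I get the pointwise bound $|a_k(r)|+|b_k(r)|\leq Cc_0 r^{-k_1}(\ln r)^{k_2}$ for all $k\geq 0$, $r>1$.

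Because $\Delta = \partial_r^2 + r^{-1}\partial_r + r^{-2}\partial_\theta^2$, each coefficient must satisfy the Cauchy--Euler equation
\begin{equation*}
A_k''(r) + r^{-1}A_k'(r) - k^2 r^{-2} A_k(r) = a_k(r),\qquad r>1,
\end{equation*}
with homogeneous solutions $r^k, r^{-k}$ for $k\geq 1$ (Wronskian $-2k/r$) and $1,\ln r$ for $k=0$. Variation of parameters suggests
\begin{equation*}
A_k(r) = -\frac{r^k}{2k}\int_r^{\infty} s^{1-k} a_k(s)\,ds - \frac{r^{-k}}{2k}\int_1^r s^{1+k} a_k(s)\,ds \qquad (k\geq 1),\qquad
V_0(r) = \int_1^r \frac{dt}{t}\int_1^t s\, a_0(s)\,ds,
\end{equation*}
with the limits $1$ and $\infty$ swapped (or the integrand split at an intermediate scale, with a homogeneous solution added back) whenever the displayed integral diverges, so that each term decays as fast as possible. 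Plugging $|a_k(s)|\leq Cc_0 s^{-k_1}(\ln s)^{k_2}$ into these formulas and integrating yields, away from resonance,
\begin{equation*}
|A_k(r)|,\,|B_k(r)| \leq \frac{Cc_0}{\bigl|(2-k_1)^2 - k^2\bigr|}\, r^{2-k_1}(\ln r)^{k_2},
\end{equation*}
which summed against $\sum 1/k^2$ produces the generic estimate $Cc_0 r^{2-k_1}(\ln r)^{k_2}$ claimed in the first case of~\eqref{equ:FastVanishing}.

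Resonance occurs exactly when $k_1 \in \{2-k, 2+k\}$. Since $k_1>0$, this forces $k_1\in\mathbb N_*$, and for each such $k_1$ at most one mode resonates: the $k=0$ mode when $k_1=2$, the $k=1$ mode when $k_1\in\{1,3\}$, and in general the mode $k=k_1-2$ when $k_1\geq 3$. At a resonant mode the particular solution picks up an extra $\ln r$ factor (the integrand becomes $s^{-1}(\ln s)^{k_2}$ after the variation-of-parameters substitution); this explains the upgrade $(\ln r)^{k_2}\mapsto (\ln r)^{k_2+1}$ in case two of~\eqref{equ:FastVanishing}. In the special case $k_1=2$ the resonance lives in the $k=0$ mode, where $V_0$ involves two nested integrations, both of which contribute an extra logarithm; this yields the $(\ln r)^{k_2+2}$ of case three.

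Finally I sum the series: since $g\in C^\infty$, integration by parts in $\theta$ gives $|a_k(r)|+|b_k(r)|\leq C_N k^{-N}$ for every $N$ (locally uniformly in $r>1$), so termwise differentiation of the expansions for $v$ is justified and yields $v\in C^\infty(\mathbb R^2\setminus\overline{B_1})$ with $\Delta v = g$. The principal obstacle is the bookkeeping at resonance: one has to split the variation-of-parameters integrals at $s=r$, recognize that the boundary terms produced there are homogeneous solutions (multiples of $r^{\pm k}$ or $\ln r$) that can be absorbed without disturbing the stated bound, and verify that the residual ``resonant'' integral indeed produces exactly one (or, when $k_1=2$, two) additional $\ln r$ factors. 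Once this is done, the three-case estimate~\eqref{equ:FastVanishing} follows by summing the pointwise bounds on $A_k, B_k, V_0$.
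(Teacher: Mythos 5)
Your proposal uses the same spherical-harmonic decomposition, the same family of Cauchy--Euler ODEs, and the same variation-of-parameters representation as the paper, so the architecture is identical. Where you genuinely diverge is in the step that converts mode-by-mode estimates into a bound on $v$: the paper never produces a pointwise bound on the individual coefficients $A_k, B_k$, but rather bounds the $\ell^2$-sum $\sum_{k,m}a_{k,m}^2(r)$ by Cauchy--Schwarz against a cleverly chosen power $\tau^{\pm\epsilon}$, obtaining $\|v(r\cdot)\|_{L^2(\mathbb{S}^1)}\lesssim c_0 r^{2-k_1}(\ln r)^{k_2}$, and only then passes to a pointwise estimate on $v$ by rescaling and the weak Harnack inequality (Theorem 8.17 of Gilbarg--Trudinger). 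You instead extract a $k$-uniform pointwise bound $|a_k(r)|+|b_k(r)|\lesssim c_0 r^{-k_1}(\ln r)^{k_2}$ from H\"older on $\mathbb{S}^1$, observe that the ODE solution map supplies an extra factor $\bigl|(2-k_1)^2-k^2\bigr|^{-1}=O(k^{-2})$, and sum the series absolutely. This skips both the $\epsilon$-weighted Cauchy--Schwarz computation and the weak-Harnack appeal, and gives the pointwise bound on $v$ directly; it is arguably more elementary and only needs $p\geq 1$ rather than $p\geq 2$. What the paper's $L^2$-plus-Harnack route buys is robustness to $g$ that is merely $L^2$ on each sphere without pointwise Fourier decay, but since $g\in C^\infty$ here, that generality is not needed. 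Your identification of the resonant modes ($k_1\in\{2-k,2+k\}$, hence $k_1\in\mathbb{N}_*$, with the double resonance at $k_1=2$ producing two extra logarithms) matches the paper's case analysis; the one caveat is that you state but do not carry out the bookkeeping of choosing $\int_1^r$ versus $\int_r^\infty$ according to the sign of $1-k-k_1+1$, which the paper does case by case — this would need to be filled in for a complete proof, but the sketch correctly anticipates it.
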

For $k_1>2$ case, Lemma \ref{Lem-fastConverge} is similar to the one proved  earlier  by the authors in \cite{Liu-Bao-2021-Dim2}.
The proof here is similar with minor modifications.
\begin{proof}[Proof of Lemma \ref{Lem-fastConverge}] Here we only provide detail proof for $0<k_1<1$ case, the rest parts follow with minor modifications on the choice of $a_{k,m}$ and $\epsilon>0$ below as in \eqref{equ-temp-10}.

In polar coordinate we have

$$
\Delta v=\dfrac{\partial^2v}{\partial r^2}+\frac{1}{r}\frac{\partial v}{\partial r}+\frac{1}{r^{2}} \frac{\partial^{2} v}{\partial \theta^{2}},
$$
where $r:=|x|$ represents the radial distance and $\theta:=\frac{x}{|x|}$ the angle.
Let

$$
Y_1^{(0)}(\theta)\equiv \dfrac{1}{\sqrt{2\pi}},\quad Y_1^{(k)}(\theta)=\dfrac{1}{\sqrt \pi}\cos k\theta\quad\text{and}\quad
Y_2^{(k)}(\theta)=\dfrac{1}{\sqrt \pi}\sin k\theta,
$$
which forms a complete  standard orthogonal basis of $L^2(\mathbb S^1)$.
Decompose $g$ and the wanted solution $v$
into

$$
v(x)=a_{0,1}(r)+\sum_{k=1}^{+\infty}\sum_{m=1}^{2} a_{k, m}(r) Y_{m}^{(k)}(\theta),\quad
g(x)=b_{0,1}(r)+\sum_{k=1}^{+\infty}\sum_{m=1}^{2} b_{k, m}(r) Y_{m}^{(k)}(\theta),
$$
where

$$
a_{k,m}(r):=\int_{\mathbb{S}^{n-1}} v(r \theta) \cdot Y_{m}^{(k)}(\theta) d  \theta,\quad b_{k,m}(r):=\int_{\mathbb{S}^{n-1}} g(r\theta) \cdot Y_{m}^{(k)}(\theta)  d  \theta.$$
By the linear independence of $Y_m^{(k)}(\theta)$, \eqref{equ:Laplace} implies that

$$
a_{0,1}''(r)+\frac{1}{r}a_{0,1}'(r)=b_{0,1}(r)\quad\text{in }r>1
$$
and for all $k\in\mathbb{N}_*$ with $m=1,2,$

$$
a_{k,m}^{\prime \prime}(r)+\frac{1}{r} a_{ k,m}^{\prime}(r)-\frac{k^2}{r^{2}} a_{k,m}(r) =b_{k,m}(r)\quad\text{in }r>1.
$$

By solving the ODE, there exist constants $C_{k,m}^{(1)},C_{k,m}^{(2)}$ such that for all $r>1$,
\begin{equation}\label{Equ-def-Wronski}
\begin{array}{lll}
  a_{k,m}(r)&=&C_{k,m}^{(1)}r^k+
C_{k,m}^{(2)}r^{-k}\\
&&\displaystyle
-\dfrac{1}{2k}r^k\int_{2}^r\tau^{1-k}b_{k,m}(\tau) d  \tau
+\dfrac{1}{2k}r^{-k}\int_{2}^r\tau^{1+k}b_{k,m}(\tau) d  \tau
\end{array}
\end{equation}
for $k\geq 1$
and

$$
\begin{array}{lllll}
a_{0,1}(r)&=& C_{0,1}^{(1)}+C_{0,1}^{(2)}\ln r\\
&&\displaystyle -\int_2^r \tau\ln\tau  b_{0,1}(\tau)d \tau+\ln r\int_2^r \tau b_{0,1}(\tau) d \tau.
\end{array}
$$
 By \eqref{equ:vanishing-g-Lp},
\begin{equation}\label{Equ-converge}
|b_{0,1}(r)|^2+\sum_{k=1}^{+\infty}\sum_{m=1}^{2}|b_{k,m}(r)|^2=||g(r\cdot)||^2_{L^2(\mathbb{S}^{n-1})}
\leq c_0^2(2\pi)^{\frac{p-2}{p}}r^{-2k_1}(\ln r)^{2k_2}
\end{equation}
for all $r>1$. Then by $0<k_1<1$, we have $r^{1-k}b_{k,m}(r)\in L^1(2,+\infty)$  for all $k\geq 2$ and $r^{k+1}b_{k,m}(r)\not\in L^1(2,+\infty)$  for all
$k\in\mathbb N$. We choose $C_{k,m}^{(1)}$ and $C_{k,m}^{(2)}$ in \eqref{Equ-def-Wronski} such that

$$
  a_{0,1}(r):=-\int_{2}^r\tau\ln \tau b_{0,1}(\tau)d\tau+\ln r\int_{2}^r\tau b_{0,1}(\tau)d\tau,
$$

$$
a_{k,m} (r):=
- \dfrac{1}{2}r\int_{2}^rb_{k,m}(\tau) d  \tau
+ \dfrac{1}{2}r^{-1}\int_{2}^r\tau^{2}b_{k,m}(\tau) d  \tau
$$
for $k=1$ and

$$
a_{k,m} (r):=
- \dfrac{1}{2k}r^k\int_{+\infty}^r\tau^{1-k}b_{k,m}(\tau) d  \tau
+ \dfrac{1}{2k}r^{-k}\int_{2}^r\tau^{1+k}b_{k,m}(\tau) d  \tau
$$
for all $k\geq 2$.

For $a_{0,1}(r)$, we notice that there are cancellation properties as below. By \eqref{Equ-converge} we have
\begin{equation}\label{equ-cancellation}
  \begin{array}{llll}
  |a_{0,1}(r)| & = & \displaystyle \left|\int_2^r\ln\frac{r}{\tau}\tau b_{0,1}(\tau) d\tau\right|\\
  &\leq & \displaystyle Cc_0\int_2^r\ln\frac{r}{\tau}\tau^{1-k_1}(\ln \tau)^{k_2}d\tau\\
  &\leq &  \displaystyle Cc_0
  \left(
  \ln r\int_2^r\tau^{1-k_1}(\ln \tau)^{k_2}d\tau
  -\int_2^r\tau^{1-k_1}(\ln \tau)^{k_2+1}d\tau
  \right)
  .\\
  \end{array}
\end{equation}
By a direct computation, for any $0<k_1<2$ we have

$$
\begin{array}{llll}
&\displaystyle\int_2^r\tau^{1-k_1}(\ln\tau)^{k_2+1}d\tau\\
=&\displaystyle \dfrac{1}{2-k_1}
  \left(
  r^{2-k_1}(\ln r)^{k_2+1}-2^{2-k_1}(\ln 2)^{k_2+1}-(k_2+1)\int_2^r\tau^{1-k_1}(\ln\tau)^{k_2}d\tau
  \right).
\end{array}
$$
Consequently

$$
\begin{array}{llll}
  & \displaystyle  \ln r\int_2^r\tau^{1-k_1}(\ln \tau)^{k_2}d\tau
  -\int_2^r\tau^{1-k_1}(\ln \tau)^{k_2+1}d\tau\\
  =& \displaystyle \dfrac{1}{2-k_1}
  \left(
  r^{2-k_1}(\ln r)^{k_2+1}-2^{2-k_1}(\ln 2)^{k_2}(\ln r)-k_2\ln r\int_2^r\tau^{1-k_1}(\ln\tau)^{k_2-1}d\tau
  \right)\\
  &-\displaystyle \dfrac{1}{2-k_1}
  \left(
  r^{2-k_1}(\ln r)^{k_2+1}-2^{2-k_1}(\ln 2)^{k_2+1}-(k_2+1)\int_2^r\tau^{1-k_1}(\ln\tau)^{k_2}d\tau
  \right)\\
  \leq& \displaystyle \frac{k_2+1}{2-k_1}\int_2^r\tau^{1-k_1}(\ln\tau)^{k_2}d\tau-\frac{k_2}{2-k_1}\ln r\int_2^r\tau^{1-k_1}(\ln\tau)^{k_2-1}d\tau+C(1+\ln r)\\
  \leq &C r^{2-k_1}(\ln r)^{k_2}\\
\end{array}
$$
for some $C>0$ for all $r>2$.
This yields

$$
|a_{0,1}(r)|\leq Cc_0r^{2-k_1}(\ln r)^{k_2},\quad\forall~r>2.
$$
Similar argument also holds for $k_1>2$ case with the integrate range changed from $(2,r)$ into $(r,+\infty)$. But for $k_1=2$, from \eqref{equ-cancellation} we have no cancellation and it yields

$$
|a_{0,1}(r)|\leq Cc_0r^{2-k_1}(\ln r)^{k_2+2},\quad\forall~r>2.
$$

For $0<k_1<1$ case, we pick $0<\epsilon:=\frac{1}{2}\min\{1,2-2k_1, 2k_1\}$ such that

$$
\left\{
\begin{array}{llll}
 3-2k_1-\epsilon>-1,\\
 1-2k_1-\epsilon>-1,\\
 3-2k-2k_1+\epsilon<-1, & \forall~k\geq 2.\\
\end{array}
\right.
$$
Then by \eqref{Equ-converge} and H\"older inequality, we have

$$
\begin{array}{llll}
&\displaystyle a_{0,1}^2(r)+\sum_{k=1}^{+\infty}\sum_{m=1}^{2}a_{k,m}^2(r)\\
  \leq & \displaystyle  Cc_0^2r^{4-2k_1}(\ln r)^{2k_2}
  +2\sum_{k=1}^{+\infty}\sum_{m=1}^2r^{-2k}\left|
  \int_{2}^r\tau^{1+k}b_{k,m}(\tau)d\tau
  \right|^2\\
  &\displaystyle
  +2\sum_{m=1}^2r^{2}\left|\int_{2}^rb_{1,m}(\tau)d\tau\right|^2
  +2\sum_{k=2}^{+\infty}\sum_{m=1}^2r^{2k}\left|
\int_{+\infty}^r\tau^{1-k}b_{k,m}(\tau)d\tau
\right|^2\\
\leq & \displaystyle Cc_0^2r^{4-2k_1}(\ln r)^{2k_2}\\
&+\displaystyle 2\sum_{k=1}^{+\infty}\sum_{m=1}^2r^{-2k}\int_2^r\tau^{3+2k-2k_1-\epsilon}(\ln\tau)^{2k_2}d\tau
\cdot \int_2^r
\tau^{2k_1}(\ln\tau)^{-2k_2}b_{k,m}^2(\tau)\frac{d\tau}{\tau^{1-\epsilon}}
\\
&+\displaystyle 2\sum_{m=1}^2r^{2}\int_2^r\tau^{1-2k_1-\epsilon}(\ln\tau)^{2k_2}d\tau
\cdot
\int_2^r\tau^{2k_1}(\ln\tau)^{-2k_2}b_{1,m}^2(\tau)\frac{d\tau}{\tau^{1-\epsilon}}
\\
&+\displaystyle
2\sum_{k=2}^{+\infty}\sum_{m=1}^2r^{2k}\int^{+\infty}_r\tau^{3-2k-2k_1+\epsilon}
(\ln\tau)^{2k_2}d\tau
\cdot\int^{+\infty}_r\tau^{2k_1}(\ln\tau)^{-2k_2}b_{k,m}^2(\tau)\frac{d\tau}{\tau^{1+\epsilon}}
\\
\leq & \displaystyle Cc_0^2r^{4-2k_1}(\ln r)^{2k_2}+ Cr^{4-2k_1-\epsilon}(\ln r)^{2k_2}\int_2^r \sum_{k=1}^{+\infty}\sum_{m=1}^2
\tau^{2k_1}(\ln\tau)^{-2k_2}b_{k,m}^2(\tau)\frac{d\tau}{\tau^{1-\epsilon}}
\\
& +\displaystyle Cr^{4-2k_1+\epsilon}(\ln r)^{2k_2}\int_r^{+\infty} \sum_{k=2}^{+\infty}\sum_{m=1}^2
\tau^{2k_1}(\ln\tau)^{-2k_2}b_{k,m}^2(\tau)\frac{d\tau}{\tau^{1+\epsilon}}
\\
\leq & \displaystyle Cc_0^2r^{4-2k_1}(\ln r)^{2k_2}\\
\end{array}
$$
for some $C>0$ relying only on $k_1,k_2$ and $p$.

For $1<k_1<2$ case, we only need to change $a_{1,m}$ with $m=1,2$ into
\begin{equation}\label{equ-temp-10}
a_{1,m}=-\dfrac{1}{2}r\int_{+\infty}^r\tau b_{1, m}(\tau) \mathrm{d} \tau+\frac{1}{2} r^{-1} \int_{2}^{r} \tau^{2} b_{1, m}(\tau) \mathrm{d} \tau,
\end{equation}
and $0<\epsilon:=\frac{1}{2}\min\{1,4-2k_1,2k_1-2\}$. The estimates on $\displaystyle a_{0,1}^{2}(r)+\sum_{k=1}^{+\infty} \sum_{m=1}^{2} a_{k, m}^{2}(r)$ follow similarly.

For $k_1=1$ case, we choose $\epsilon:=\frac{1}{2}$ and we use the following estimates of $a_{1,m}$.

$$
\begin{array}{llll}
a_{1,m}^2(r) &\leq &\displaystyle Cc_0^2r^2\int_2^r\tau^{-1}(\ln\tau)^{2k_2}d\tau\cdot \int_2^r\tau^2 (\ln\tau)^{-2k_2} b_{1,m}^2(\tau)\frac{d\tau}{\tau}\\
&&+\displaystyle Cc_0^2r^{-2}\int_2^{r}\tau^3(\ln\tau)^{2k_2}d\tau\cdot \int_2^r\tau^2 (\ln\tau)^{-2k_2} b_{1,m}^2(\tau)\frac{d\tau}{\tau}\\
&\leq & Cc_0^2 r^2(\ln\tau)^{2k_2+2}.
\end{array}
$$
The rest parts of estimate follow similarly.

For $k_1=2$ case, we choose $\epsilon:=\frac{1}{2}$ and change $a_{1,m}$ with $m=1,2$ into \eqref{equ-temp-10}. In this case, the estimates of $a_{0,1}$ shall be

$$
\begin{array}{llll}
  a_{0,1}^2(r) & \leq & \displaystyle Cc_0^2\int_2^r\tau^{-1}(\ln\tau)^{2+2k_2} d\tau
  \cdot
  \int_2^r\tau^{4}(\ln\tau)^{-2k_2}\frac{d\tau}{\tau}\\
  &&\displaystyle +Cc_0^2(\ln r)^2\int_2^r\tau^{-1}(\ln\tau)^{2k_2}d\tau
  \cdot \int_2^r\tau^4 (\ln\tau)^{-2k_2}b_{0,1}^2(\tau)\frac{d\tau}{\tau}\\
  &\leq & Cc_0^2(\ln r)^{2k_2+4}.
\end{array}
$$
The rest parts of estimate follow similarly.

This proves that $v(r)$ is well-defined, is a solution of \eqref{equ:Laplace} in distribution sense \cite{Gunther-ConformalNormalCoord} and satisfies

$$
||v(r\cdot)||^2_{L^2(\mathbb S^{1})}\leq
\left\{
\begin{array}{lllll}
 Cc_0^2r^{4-2k_1}(\ln r)^{2k_2}, & \text{if }k_1\not\in\mathbb N_*,\\
 Cc_0^2 r^{4-2k_1}(\ln r)^{2k_2+2}, & \text{if }k_1\in\mathbb N_*\setminus\{2\},\\
  Cc_0^2 (\ln r)^{2k_2+4}, & \text{if }k_1=2,\\
\end{array}
\right.
$$
By interior regularity theory of elliptic differential equations,  $v$ is smooth \cite{Book-Gilbarg-Trudinger}.
Then the pointwise decay rate at infinity follows from re-scaling method and weak Harnack inequality as Theorem 8.17 of \cite{Book-Gilbarg-Trudinger} etc. (see also Lemma 3.1 of \cite{Liu-Bao-2021-Dim2}).
\end{proof}
\begin{remark}
  By H\"older inequality, the constant $C$ relying on $p$ in \eqref{equ:FastVanishing}  remains finite when $p=\infty$ in  \eqref{equ:vanishing-g-Lp}. Furthermore, by approximation method, the results hold for more general right hand side term other than smooth functions.
\end{remark}
Similar to Lemma 3.2 in \cite{Liu-Bao-2020-ExpansionSPL}, by interior estimate, we have the following
\begin{lemma}\label{Lem-fastConverge-2}
  Let $g\in C^{\infty}(\mathbb R^2)$ satisfy

  $$
  g=O_l(|x|^{-k_1}(\ln|x|)^{k_2})\quad\text{as }|x|\rightarrow\infty
  $$
  for some $k_1>0,k_2\geq 0$ and $l-1\in\mathbb N$. Then

  $$
  v_g=
  \left\{
  \begin{array}{lllll}
  O_{l+1}(|x|^{2-k_1}(\ln |x|)^{k_2}), &\text{if }k_1\not\in\mathbb N_*,\\
  O_{l+1}(|x|^{2-k_1}(\ln |x|)^{k_2+1}), &\text{if }k_1\in\mathbb N_*\setminus\{2\},\\
  O_{l+1}(|x|^{2-k_1}(\ln |x|)^{k_2+2}), &\text{if }k_1=2,\\
  \end{array}
  \right.\quad \text{as }|x|\rightarrow\infty,
  $$
  where $v_g$ denotes the solution found in Lemma \ref{Lem-fastConverge}.
\end{lemma}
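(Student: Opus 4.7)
The plan is to upgrade the pointwise $L^{\infty}$ bound supplied by Lemma \ref{Lem-fastConverge} to a matching $O_{l+1}$ bound via rescaling and interior elliptic regularity for the equation $\Delta v_g = g$. The hypothesis $g=O_l(|x|^{-k_1}(\ln|x|)^{k_2})$ already contains the $L^\infty$ estimate $|g(x)|\le C|x|^{-k_1}(\ln|x|)^{k_2}$ for $|x|$ large, which implies $\|g(r\cdot)\|_{L^p(\mathbb S^1)}\le Cr^{-k_1}(\ln r)^{k_2}$ for every $p\in[2,\infty]$. Lemma \ref{Lem-fastConverge} therefore produces $v_g$ with the pointwise decay recorded in \eqref{equ:FastVanishing}; denote this bound by $M(|x|)$, where $M(r)=r^{2-k_1}(\ln r)^{k_2+\delta}$ with $\delta\in\{0,1,2\}$ according to the three cases.

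For the derivative estimates, fix $x$ with $|x|\ge 4$ and set $r:=|x|/2$. Introduce the rescaled functions
\begin{equation*}
  \tilde v(y):=v_g(x+ry),\qquad \tilde g(y):=r^2 g(x+ry),\qquad y\in B_1(0),
\end{equation*}
which satisfy $\Delta_y\tilde v=\tilde g$ on $B_1(0)$. Since $|x+ry|\in[|x|/2,3|x|/2]$ and $\ln|x+ry|\asymp\ln|x|$ for $y\in B_1$ and $|x|$ large, the decay hypothesis on $g$ and its first $l$ derivatives gives
\begin{equation*}
  |D^k\tilde g(y)|=r^{k+2}|D^k g(x+ry)|\le Cr^{k+2}|x|^{-k_1-k}(\ln|x|)^{k_2}\le C|x|^{2-k_1}(\ln|x|)^{k_2},
\end{equation*}
for $k=0,1,\ldots,l$, uniformly in $x$; similarly $\|\tilde v\|_{L^\infty(B_1)}\le M(|x|)$. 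Applying standard interior Schauder estimates (for some fixed $\alpha\in(0,1)$) to $\Delta\tilde v=\tilde g$ on the unit ball yields
\begin{equation*}
  \|\tilde v\|_{C^{l+2,\alpha}(B_{1/2})}\le C\bigl(\|\tilde v\|_{L^\infty(B_1)}+\|\tilde g\|_{C^{l,\alpha}(B_1)}\bigr)\le CM(|x|),
\end{equation*}
with $C$ depending only on $l$ and $\alpha$. Unscaling through $|D^k_xv_g(x)|=r^{-k}|D^k_y\tilde v(0)|$ gives
\begin{equation*}
  |D^kv_g(x)|\le Cr^{-k}M(|x|)\le C|x|^{2-k_1-k}(\ln|x|)^{k_2+\delta}
\end{equation*}
for all $k=0,1,\ldots,l+2$, which is even stronger than the desired $O_{l+1}$ bound.

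The main point to check is that the Schauder constant is independent of $x$; this is automatic because the rescaling transfers everything to the unit ball equipped with the flat Laplacian. The only other detail is the equivalence $\ln|x+ry|\asymp\ln|x|$, which holds trivially once $|x|$ is sufficiently large. Thus the proof reduces to a direct rescaling bootstrap entirely analogous to Lemma 3.2 of \cite{Liu-Bao-2020-ExpansionSPL}.
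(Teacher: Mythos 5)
Your rescaling-plus-interior-Schauder strategy is exactly the intended approach; the paper itself gives no inline proof and simply appeals to the analogous Lemma~3.2 of \cite{Liu-Bao-2020-ExpansionSPL}, which runs the same bootstrap. One small overclaim needs correcting: the hypothesis $g=O_l(\cdot)$ only controls $|D^kg|$ for $k\le l$, which yields $\tilde g\in C^{l-1,1}(B_1)\subset C^{l-1,\alpha}(B_1)$ with norm $\lesssim |x|^{2-k_1}(\ln|x|)^{k_2}$, but gives no control on the $C^\alpha$ seminorm of $D^l\tilde g$. The Schauder step therefore legitimately produces only
\begin{equation*}
\|\tilde v\|_{C^{l+1,\alpha}(B_{1/2})}\le C\bigl(\|\tilde v\|_{L^\infty(B_1)}+\|\tilde g\|_{C^{l-1,\alpha}(B_1)}\bigr)\le CM(|x|),
\end{equation*}
not the $C^{l+2,\alpha}$ bound you wrote, so your remark that the estimate is ``even stronger than $O_{l+1}$'' should be dropped. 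After this adjustment, unscaling gives $|D^kv_g(x)|\le C|x|^{2-k_1-k}(\ln|x|)^{k_2+\delta}$ for $k=0,\ldots,l+1$, which is precisely the stated $O_{l+1}$ conclusion, and the rest of your argument (the $L^p(\mathbb S^1)$ bound feeding Lemma~\ref{Lem-fastConverge}, the scale-invariance of the Schauder constant, the $\ln|x+ry|\asymp\ln|x|$ equivalence) is sound.
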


\section{Quadratic part of $u$ at infinity}\label{seclabel-WeakConvergence}

In this section, we prove a weaker asymptotic behavior than \eqref{equ:asymptotic-behav}, which concerns only on the quadratic part of $u$ at infinity.

\begin{lemma}\label{lem:WeakConvergence}
  Let $u,f$ be as in Theorem \ref{thm:perturb-dim2}. Then there exist $A\in\mathtt{Sym}(2)$ satisfying $F_{\tau}(\lambda(A))=f(\infty)$ and $\epsilon>0$ such that
  \begin{equation}\label{equ:weakConverge}
  \left|u(x)-\frac{1}{2}x^TAx\right|=
  O_2(|x|^{2-\epsilon})
  \end{equation}
  as $|x|\rightarrow\infty.$
\end{lemma}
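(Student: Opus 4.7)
The plan is to follow the Caffarelli--Li/Bao--Li--Zhang style of combining rescaling, compactness and a Bernstein-type rigidity for the limiting equation, and then to upgrade qualitative convergence to an algebraic rate via linearization together with the Poisson machinery of Section \ref{seclabel-Poisson}.

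\textbf{Step 1 (uniform regularity of rescalings).} For $R\geq 1$ define $u_R(y):=R^{-2}u(Ry)$ on $y\in B_2\setminus B_{1/2}$. Then $D^2u_R(y)=D^2u(Ry)$, so $u_R$ solves $F_\tau(\lambda(D^2u_R))=f_R(y):=f(Ry)$ and inherits the semi-convexity \eqref{equ:cond:semi-convex}. The growth hypothesis \eqref{equ:cond:quadratic-growth} (or convexity alone when $\tau=0$, cf.\ the remark) yields $\|u_R\|_{L^\infty(B_2\setminus B_{1/2})}\leq C$. The concavity of $F_\tau$ on the admissible branch together with the semi-convex barrier turns the equation into a uniformly elliptic concave equation, and the Evans--Krylov theory (combined with Pogorelov/Warren--Yuan Hessian estimates at the endpoint cases $\tau=0,\pi/2$) gives
\[
\|u_R\|_{C^{2,\alpha}(\overline{B_{3/2}\setminus B_{3/4}})}\leq C
\]
uniformly in $R$.

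\textbf{Step 2 (identification of the limiting quadratic).} Along any sequence $R_j\to\infty$, pass to a subsequence so that $u_{R_j}\to u_\infty$ in $C^2_{\rm loc}(\mathbb R^2\setminus\{0\})$, where $u_\infty$ solves $F_\tau(\lambda(D^2u_\infty))=f(\infty)$ and still satisfies \eqref{equ:cond:semi-convex}. Assumption \eqref{equ:cond:fInfinity} excludes the degenerate boundary phases, so the Bernstein/Liouville theorems for \eqref{Equ:intro} with constant right-hand side (Warren \cite{Warren-Calibrations-MA}, together with J\"orgens--Calabi--Pogorelov for $\tau=0$ and Yuan \cite{Yuan-GlobalSolution-SPL} for $\tau=\pi/2$) force $u_\infty=\tfrac12 y^TAy$ with $F_\tau(\lambda(A))=f(\infty)$. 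Independence of $A$ from the subsequence is obtained, as in \cite{Caffarelli-Li-ExtensionJCP,Bao-Li-Zhang-ExteriorBerns-MA}, from the strict monotonicity of $F_\tau$ on the admissible cone and the quadratic upper bound on $u$.

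\textbf{Step 3 (algebraic rate by linearization).} Set $w(x):=u(x)-\tfrac12 x^TAx$, so $D^2w\to 0$ uniformly at infinity by Step 2. Writing
\[
a^{ij}(x)\,w_{ij}(x)=f(x)-f(\infty),\qquad a^{ij}(x):=\int_0^1 \partial_{M_{ij}}F_\tau(\lambda(A+tD^2w(x)))\,dt,
\]
the matrix $(a^{ij})$ converges to the constant positive matrix $DF_\tau(\lambda(A))$. After an affine change of variables that diagonalizes this limit to the identity, $w$ satisfies a small perturbation of the Poisson equation
\[
\Delta\tilde w=\tilde g(y)+E(y),\qquad |\tilde g(y)|=O(|y|^{-\zeta}),\ \zeta>2,
\]
where $E(y)=o(1)|D^2\tilde w|$ can be absorbed on a sufficiently far exterior domain. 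Lemma \ref{Lem-fastConverge} and Lemma \ref{Lem-fastConverge-2} then supply a particular solution of size $O_2(|y|^{2-\zeta}(\ln|y|)^{k})$, and the remaining homogeneous part, being harmonic on an exterior domain with at most quadratic growth and sublinear gradient by Step 1, is killed by a Liouville-type argument modulo the quadratic term already absorbed into $\tfrac12 y^TAy$. A short dyadic iteration on annuli $\{2^k\leq|x|\leq 2^{k+1}\}$, picking up a definite geometric factor of decay at each step until the decay rate $2-\epsilon$ with $\epsilon:=\tfrac12\min\{\zeta-2,1\}$ is reached, finishes the proof.

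\textbf{Main obstacle.} The delicate point is Step 2 in the critical/degenerate cases ($\tau=0$ where the linearized coefficients can degenerate as eigenvalues go to $+\infty$, and $\tau=\pi/2$ near the critical phase), where one must verify that the chosen Bernstein theorem applies to the precise branch selected by \eqref{equ:cond:semi-convex} and \eqref{equ:cond:fInfinity}; concurrently, in Step 3 one needs that the linearized operator remains uniformly elliptic along the whole path $t\in[0,1]$ of matrices $A+tD^2w$, which requires that $|D^2 w|$ be small enough to stay inside the admissible cone defined by \eqref{equ:cond:semi-convex}. This is the reason the compactness argument in Step 2 must be done before attempting any quantitative linearization.
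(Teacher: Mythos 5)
Your approach (rescaling, compactness, Bernstein rigidity, then linearization plus a dyadic iteration) is a genuinely different strategy from the paper's. The paper does not attempt a self-contained iteration at this stage: for $\tau\in[0,\pi/4]$ it cites Theorem \ref{thm:weakConv-Small} from the authors' earlier work (which itself is built on Legendre duality and the Bao--Li--Zhang Monge--Amp\`ere asymptotics), and for $\tau\in(\pi/4,\pi/2]$ it rewrites the equation in algebraic form as a Monge--Amp\`ere equation $\det D^2v=g(x)$ with $g^{1/2}-(1+\cot^2f(\infty))^{1/2}\in L^2$, establishes the Hessian bound via Bhattacharya's interior estimate under the supercritical condition $|f|\geq\delta>0$, and then applies Theorem \ref{thm:weakConverge-MA}. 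The critical case $\tfrac{b}{\sqrt{a^2+1}}f(\infty)+\tfrac{\pi}{2}=0$ for $\tau\in(\pi/4,\pi/2)$ is treated separately.

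The central gap in your proposal is in Step 3, in the passage from the qualitative statement $D^2u(x)\to A$ (which compactness does give) to the quantitative rate $|D^2u(x)-A|=O(|x|^{-\epsilon})$. After linearizing, you have $\Delta\tilde w=\tilde g+E$ with $\tilde g=O(|y|^{-\zeta})$ but $E(y)=(a^{ij}(\infty)-a^{ij}(y))D_{ij}\tilde w$, where both factors are only $o(1)$ at an unknown rate. Lemma \ref{Lem-fastConverge} requires an algebraic decay on the source term, so it cannot digest $E$ as written, and the claim that a ``dyadic iteration picks up a definite geometric factor'' is exactly the nontrivial improvement-of-oscillation argument that one would have to prove; it does not follow formally from uniform ellipticity plus $a^{ij}\to a^{ij}(\infty)$. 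This is precisely the content the paper imports wholesale by quoting Theorem \ref{thm:weakConverge-MA} (Bao--Li--Zhang), whose proof for Monge--Amp\`ere goes through level-set normalization (John's lemma, sections) rather than a naive annulus iteration. Without either importing that theorem (after the algebraic reduction, as the paper does) or supplying the improvement-of-oscillation lemma, the decay rate $2-\epsilon$ is not established.

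There are secondary issues worth flagging. In Step 1 the uniform $C^{2,\alpha}$ bound on $u_R$ is asserted through ``Evans--Krylov plus Pogorelov/Warren--Yuan''; but in dimension $2$ the paper deliberately avoids Evans--Krylov (which needs concavity) and uses the $2$-dimensional Theorem 17.11 of Gilbarg--Trudinger, and the Hessian bound for $\tau=\pi/2$ comes from Bhattacharya's estimate and needs the supercritical condition from \eqref{equ:cond:fInfinity}; none of this is automatic and the intermediate $\tau$ require their own argument. In Step 2, the limit $u_\infty$ lives only on $\mathbb R^2\setminus\{0\}$, so a removable-singularity argument (using the uniform Hessian bound near $0$ and then regularity across the puncture) is needed before the entire-space Bernstein theorems apply. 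Finally, the independence of $A$ from the subsequence is nontrivial in this context: two subsequences could a priori converge to different matrices on the same level set $\{F_\tau(\lambda(A))=f(\infty)\}$, and the uniqueness in the cited references comes from a normalization argument, not merely strict monotonicity of $F_\tau$.
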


For $\tau=0$, the results in Lemma \ref{lem:WeakConvergence} were proved earlier in Bao--Li--Zhang \cite{Bao-Li-Zhang-ExteriorBerns-MA}.
More rigorously, in the proof of Theorem 1.2 in \cite{Bao-Li-Zhang-ExteriorBerns-MA} (see also Theorem 2.2 of \cite{Liu-Bao-2021-Expansion-LagMeanC}), we have the following result for Monge--Amp\`ere type equations, which holds for general $n\geq 2$.
\begin{theorem}\label{thm:weakConverge-MA}
  Let $u \in C^{0}\left(\mathbb{R}^{n}\right)$ be a convex viscosity solution of

  \begin{equation*}
\operatorname{det} D^{2} u=\psi(x) \quad \text {in } \mathbb{R}^{n}
\end{equation*}
with $u(0)=\min _{\mathbb{R}^{n}} u=0$, where $0<\psi \in C^{0}\left(\mathbb{R}^{n}\right)$ and

\begin{equation*}
\psi^{\frac{1}{n}}-1 \in L^{n}\left(\mathbb{R}^{n}\right).
\end{equation*}
Then there exists a linear transform $T$ satisfying $\operatorname{det} T=1$ such that $v:=u \circ T$ satisfies

$$
  \left|v-\dfrac{1}{2}|x|^2\right|\leq C|x|^{2-\varepsilon},\quad \forall~ |x|\geq 1
  $$
  for some $C>0$ and $\varepsilon>0$.
\end{theorem}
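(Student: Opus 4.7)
The plan is to follow the Bao--Li--Zhang strategy: normalize the sub-level sets (sections) of $u$ by unimodular affine maps, invoke the Caffarelli--Li exterior extension of the J\"orgens--Calabi--Pogorelov theorem, and quantify the rate of convergence using the $L^n$ integrability of $\psi^{1/n}-1$.

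First, for each $t>0$ consider the section $S_t := \{x : u(x) < t\}$. Since $u$ is a convex Alexandrov solution with $u(0)=\min u = 0$ and the Monge--Amp\`ere measure $\det D^2 u = \psi$ is comparable to $1$ on average (by the $L^n$ hypothesis), standard Alexandrov volume estimates yield $|S_t| \asymp t^{n/2}$ and each $S_t$ is a bounded convex body. By John's lemma there is a unimodular linear map $T_t$ (with $\det T_t = 1$) such that $t^{-1/2} T_t^{-1}(S_t)$ is comparable to the unit Euclidean ball uniformly in $t$. The rescaled solution $u_t(y) := t^{-1} u(\sqrt t \, T_t y)$ satisfies the normalized equation $\det D^2 u_t(y) = \psi_t(y) := \psi(\sqrt t \, T_t y)$ on a set comparable to the unit ball, and the change of variables together with $\det T_t = 1$ translates the global $L^n$ smallness of $\psi^{1/n}-1$ into the decay $\|\psi_t^{1/n}-1\|_{L^n} \to 0$ as $t \to \infty$.

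Second, I would prove that $T_t$ converges to a limit unimodular $T$ as $t\to\infty$. Working along a dyadic scale $t_k = 2^k$, Caffarelli's compactness and $W^{2,p}$ estimates for Monge--Amp\`ere solutions with right-hand side close to $1$ imply that the normalized $u_{t_k}$ are $C^{2,\alpha}$-close to the radial quadratic $\tfrac{1}{2}|y|^2$ (up to a rotation), with a quantitative modulus controlled by the $L^n$ norm of $\psi_{t_k}^{1/n}-1$. Comparing the normalizations of $S_{t_k}$ and $S_{t_{k+1}}$ yields $\|T_{t_{k+1}}T_{t_k}^{-1} - R_k\| \le C\epsilon_k$ for some rotations $R_k$, where $\epsilon_k := \|\psi^{1/n}-1\|_{L^n(B_{C\sqrt{t_{k+1}}}\setminus B_{c\sqrt{t_k}})}$. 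Since $\sum_k \epsilon_k^n < \infty$ (tail of a global $L^n$ function), the telescoping product is Cauchy and $T_t$ converges to a limit $T$. Setting $v := u\circ T$ produces a convex Alexandrov solution on $\mathbb R^n$ whose sections at infinity are asymptotically round, and one invokes the Caffarelli--Li exterior J\"orgens--Calabi--Pogorelov result to conclude that $v - \tfrac12|x|^2 = o(|x|^2)$. To upgrade this to the polynomial rate, I would iterate on the dyadic annuli $A_k := \{2^k \le |x| \le 2^{k+1}\}$: the rescaled excess $w_k(y) := 4^{-k}(v(2^k y) - \tfrac12|2^k y|^2)$ satisfies a linearized Monge--Amp\`ere equation whose coefficient matrix is close to $I$ and whose source term has $L^n$ norm controlled by $\epsilon_k$. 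Caffarelli's interior $C^{2,\alpha}$ estimates together with an improvement-of-flatness (Campanato-type) step give $\mathrm{osc}_{B_{1/2}} w_{k+1} \le \theta\,\mathrm{osc}_{B_2} w_k + C\epsilon_k$ for some $\theta \in (0,1)$; summing this recursion and choosing $\varepsilon>0$ small enough that $2^{-\varepsilon} > \theta$ yields $\mathrm{osc}_{A_k}(v-\tfrac12|x|^2) \le C\,2^{k(2-\varepsilon)}$, which is the desired estimate.

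The main obstacle will be the stability step that produces the convergence $T_t \to T$. One only knows $\psi^{1/n}-1 \in L^n$, so the right-hand side of the Monge--Amp\`ere equation is not controlled in $L^\infty$ or pointwise; Caffarelli's standard $W^{2,p}$ theory (which requires continuity of $\log \psi$) cannot be applied directly. The argument must instead exploit the affine invariance of the equation together with Caffarelli's strict convexity and $C^{1,\alpha}$ regularity for Alexandrov solutions whose Monge--Amp\`ere measure is $L^n$-close to Lebesgue measure, combined with a compactness/contradiction scheme to control $T_{t_{k+1}}T_{t_k}^{-1}$ by $\epsilon_k$. Once this quantitative normalization is in place, both the qualitative asymptote (via Caffarelli--Li) and the polynomial rate (via the iteration above) follow by fairly standard arguments.
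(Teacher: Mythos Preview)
The paper does not supply its own proof of this theorem: it is quoted as a known result extracted from the proof of Theorem~1.2 in Bao--Li--Zhang \cite{Bao-Li-Zhang-ExteriorBerns-MA} (with a parallel reference to Theorem~2.2 of \cite{Liu-Bao-2021-Expansion-LagMeanC}), and is then invoked as a black box in the proof of Lemma~\ref{lem:WeakConvergence}. There is therefore no in-paper argument to compare your proposal against.

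As an attempted reconstruction of the Bao--Li--Zhang argument your outline has the right architecture (normalize sections by unimodular maps, show the normalizers stabilize, iterate for a rate), but there is a genuine summability gap that appears twice. From $\psi^{1/n}-1\in L^n(\mathbb R^n)$ you obtain only $\sum_k\epsilon_k^{\,n}<\infty$ for your dyadic errors $\epsilon_k$; for $n\ge 2$ this implies neither $\sum_k\epsilon_k<\infty$ (which is what the convergence of the telescoping product $\prod_k T_{t_{k+1}}T_{t_k}^{-1}$ modulo rotations would require) nor any geometric decay of $\epsilon_k$ (which is what the recursion $a_{k+1}\le\theta a_k+C\epsilon_k$ needs in order to produce a \emph{polynomial} rate rather than merely $a_k\to 0$). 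For instance $\epsilon_k=(k\log^2k)^{-1/n}$ is $\ell^n$-summable, yet your recursion then yields only $|v-\tfrac12|x|^2|\lesssim |x|^2(\log|x|)^{-1/n}$, which is not $O(|x|^{2-\varepsilon})$ for any $\varepsilon>0$. The proof in \cite{Bao-Li-Zhang-ExteriorBerns-MA} does not proceed by summing dyadic increments of the normalizer in this way; the eccentricity of the sections is controlled by a direct comparison/compactness argument against the Pogorelov paraboloid, and the rate emerges from a different mechanism that is compatible with the mere $L^n$ hypothesis. Separately, your appeal to the Caffarelli--Li exterior J\"orgens--Calabi--Pogorelov theorem is not available here: that result requires $\det D^2u\equiv 1$ outside a compact set, whereas the present hypothesis is only the integral condition $\psi^{1/n}-1\in L^n$, which is exactly the gap this theorem is meant to close.
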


For $\tau\in(0,\frac{\pi}{4}]$ cases, the results follow from Legendre transform and the asymptotic behavior for the Monge--Amp\`ere type equations and Poisson equations. More explicitly, as proved for general $n\geq 2$ cases in Theorem 2.1 and Remark 2.6 of \cite{Liu-Bao-2021-Expansion-LagMeanC}, we have the following result.
\begin{theorem}\label{thm:weakConv-Small}
  Let $u\in C^2(\mathbb R^2)$ be a classical solution of \eqref{Equ-SPL-perturb} with $\tau\in[0,\frac{\pi}{4}]$ and $f\in C^1(\mathbb R^2)$ satisfy
   \begin{equation}\label{1-orderCondition}
|x|^{\zeta}|f(x)-f(\infty)|+|x|^{1+\zeta'}
  \left| Df ( x ) \right|\leq C,\quad\forall~|x|>1
  \end{equation}
for some $C>0$ with $\zeta>1,\zeta'>0$ for $\tau\in[0,\frac{\pi}{4})$
 and $\zeta,\zeta'>0$ for $\tau=\frac{\pi}{4}$. Let $D^2u$ and $f(\infty)$ satisfy
  \eqref{equ:cond:semi-convex} and  \eqref{equ:cond:fInfinity} respectively.
  For $\tau\in(0,\frac{\pi}{4}]$, we assume further that $u$ satisfies \eqref{equ:cond:quadratic-growth}.
Then there exist $\epsilon>0, A\in\mathtt{Sym}(2)$ satisfying $F_{\tau}(\lambda(A))=f(\infty)$ and \eqref{equ:cond:semi-convex} and $C>0$ such that

$$
  ||D^2u||_{C^{\alpha}(\mathbb R^2)}\leq C
  \quad\text{and}\quad \left|D^2u(x)-A\right|\leq \dfrac{C}{|x|^{\epsilon}},\quad\forall~|x|>1.
  $$
\end{theorem}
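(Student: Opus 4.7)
The plan is to first establish the Hessian decay $|D^2u(x)-A|\leq C|x|^{-\epsilon}$ for some $A\in\mathtt{Sym}(2)$ with $F_\tau(\lambda(A))=f(\infty)$ satisfying \eqref{equ:cond:semi-convex} and some $\epsilon>0$; indeed, by the definition of $O_l$, the statement $|u-\frac12 x^T A x|=O_2(|x|^{2-\epsilon})$ unpacks into $|u-\frac12 x^T A x|=O(|x|^{2-\epsilon})$, $|Du-Ax|=O(|x|^{1-\epsilon})$, and $|D^2u-A|=O(|x|^{-\epsilon})$, and the first two follow from the last by radial integration.

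For $\tau\in[0,\frac{\pi}{4}]$, the Hessian decay is an immediate consequence of Theorem \ref{thm:weakConv-Small}: the decay condition \eqref{Low-Regular-Condition} with $\zeta>2$, $m\geq 3$ implies its hypothesis \eqref{1-orderCondition} with admissible parameters, while \eqref{equ:cond:semi-convex}, \eqref{equ:cond:fInfinity}, and \eqref{equ:cond:quadratic-growth} supply the remaining hypotheses. We thus obtain $\|D^2u\|_{C^\alpha(\mathbb R^2)}\leq C$ together with $|D^2u(x)-A|\leq C|x|^{-\epsilon}$.

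For $\tau\in(\frac{\pi}{4},\frac{\pi}{2}]$, Theorem \ref{thm:weakConv-Small} does not apply directly, and we instead exploit the two-dimensional structure of the equation. The $F_\tau$ operator in $\mathbb R^2$ admits an algebraic reformulation relating only $\Delta u$ and $\det D^2u$; for $\tau=\frac{\pi}{2}$, the identity $\tan(\arctan\lambda_1+\arctan\lambda_2)=(\lambda_1+\lambda_2)/(1-\lambda_1\lambda_2)$ rewrites \eqref{Equ-SPL-perturb} as
\begin{equation*}
(\Delta u)\cos f(x)-(\det D^2u-1)\sin f(x)=0,
\end{equation*}
with analogous algebraic forms for $\tau\in(\frac{\pi}{4},\frac{\pi}{2})$ via the linear-fractional change of variable $\mu_i=(\lambda_i+a-b)/(\lambda_i+a+b)$. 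Under the semi-convexity \eqref{equ:cond:semi-convex} and the supercritical phase condition \eqref{equ:cond:fInfinity}, this is a uniformly elliptic perturbation of a Monge--Amp\`ere equation whose perturbation is of order $|f(x)-f(\infty)|=O(|x|^{-\zeta})$. We then follow the blow-down/linearization scheme used to prove Theorem \ref{thm:weakConv-Small}: interior $C^{2,\alpha}$ estimates in the supercritical regime combined with \eqref{equ:cond:quadratic-growth} give a uniform Hessian bound; a blow-down argument combined with Bernstein-type rigidity of Warren forces $D^2u(x)\to A$ for some admissible $A$; and comparing $u$ with translates of $\frac12 x^T A x$ on dyadic annular shells, together with the Poisson-type estimates of Lemmas \ref{Lem-fastConverge}--\ref{Lem-fastConverge-2}, upgrades the convergence to a polynomial rate via iteration on scales.

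With the Hessian bound $|D^2u(x)-A|\leq C|x|^{-\epsilon}$ in hand for some $\epsilon\in(0,1)$, radial integration yields $|Du(x)-Ax-b|\leq C|x|^{1-\epsilon}$ for some $b\in\mathbb R^2$, and a second integration yields $|u(x)-\frac12 x^T A x-b\cdot x-c|\leq C|x|^{2-\epsilon}$ for some $c\in\mathbb R$; for $|x|\geq 1$ both $|b\cdot x|\leq C|x|\leq C|x|^{2-\epsilon}$ and $|c|\leq C\leq C|x|^{2-\epsilon}$, so these linear and constant residuals are absorbed into the $O(|x|^{2-\epsilon})$ term, producing the claimed $O_2$ estimate (the gradient bound being handled analogously). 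The main obstacle is the $\tau\in(\frac{\pi}{4},\frac{\pi}{2}]$ case: as the authors note in the remark following Theorem \ref{thm:perturb-dim2}, the perturbation term in the algebraic reformulation involves $D^2u$ itself and cannot be eliminated by a simple change of variable, in contrast to the $\tau\in[0,\frac{\pi}{4}]$ case. The supercritical phase condition $f(\infty)\neq 0$ for $\tau=\frac{\pi}{2}$ and the analogous non-degeneracy in \eqref{equ:cond:fInfinity} for $\tau\in(\frac{\pi}{4},\frac{\pi}{2})$ are essential both to ensure concavity/convexity of the operator for $C^{2,\alpha}$ regularity and to keep the linearization at $A$ a uniformly elliptic operator whose decay behavior is accessible via the Poisson estimates of Section \ref{seclabel-Poisson}.
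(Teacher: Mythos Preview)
You have misidentified the target statement. The theorem you were asked to prove is Theorem~\ref{thm:weakConv-Small}, whose hypotheses restrict to $\tau\in[0,\frac{\pi}{4}]$ and whose conclusion is the global $C^\alpha$ bound on $D^2u$ together with the Hessian decay $|D^2u(x)-A|\leq C|x|^{-\epsilon}$. Your proposal instead addresses Lemma~\ref{lem:WeakConvergence} (the $O_2(|x|^{2-\epsilon})$ asymptotic for all $\tau\in[0,\frac{\pi}{2}]$), and in doing so you \emph{invoke} Theorem~\ref{thm:weakConv-Small} as a black box for the $\tau\in[0,\frac{\pi}{4}]$ range. Read as a proof of Theorem~\ref{thm:weakConv-Small}, your argument is therefore circular; and your discussion of $\tau\in(\frac{\pi}{4},\frac{\pi}{2}]$ lies entirely outside the scope of the stated theorem.

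As for what the paper does: Theorem~\ref{thm:weakConv-Small} is not proved here at all---the paper quotes it from \cite{Liu-Bao-2021-Expansion-LagMeanC} (Theorem~2.1 and Remark~2.6), where it is established via Legendre transform and reduction to the Monge--Amp\`ere result of Theorem~\ref{thm:weakConverge-MA}. If your intent was actually to prove Lemma~\ref{lem:WeakConvergence}, then your $\tau\in[0,\frac{\pi}{4}]$ argument matches the paper's (Theorem~\ref{thm:weakConv-Small} plus radial integration), but your $\tau\in(\frac{\pi}{4},\frac{\pi}{2}]$ sketch diverges from the paper's route: the paper does not use blow-down plus Warren's Bernstein rigidity, but rather obtains the Hessian bound from Bhattacharya's interior estimate (Theorem~\ref{thm:HessianEst-1}), rewrites the equation as a Monge--Amp\`ere equation~\eqref{equ:transformed} for $v=u+\frac{\cot f(\infty)}{2}|x|^2$ with right-hand side converging to a constant, and then applies Theorem~\ref{thm:weakConverge-MA} directly. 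Your ``iteration on dyadic shells'' outline is too vague to stand on its own and misses the key reduction to the Monge--Amp\`ere theorem that makes the paper's argument work.
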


\begin{proof}[Proof of Lemma \ref{lem:WeakConvergence} with $0\leq\tau\leq \frac{\pi}{4}$]
In Lemma \ref{lem:WeakConvergence}, we have $f\in C^m(\mathbb R^2)$ satisfies \eqref{Low-Regular-Condition} for some $\zeta>2$ and $m\geq 3$. Especially $f\in C^1(\mathbb R^2)$ satisfies condition \eqref{1-orderCondition} with $\zeta'=\zeta>2$. By Theorem \ref{thm:weakConv-Small} we have $||D^2u||_{C^{\alpha}(\mathbb R^2)}$ is bounded for all $0<\alpha<1$ and $D^2u(x)$ converge to matrix $A$ at H\"older speed $|x|^{-\epsilon}$. By Newton--Leibnitz formula, since $Du, u$ are bounded on $\partial B_1$, for any $|x|>1$ we let $e:=\frac{x}{|x|}\in\partial B_1$ to obtain

$$
\begin{array}{llllll}
 \displaystyle \left|Du(x)-Du(e)-\int_0^{|x|-1}Ads\cdot e\right| & = &\displaystyle \left|\int_0^{|x|-1}D^2u((s+1)e)-Ads\cdot e\right|\\
  &\leq & \displaystyle C|x|^{1-\epsilon}.\\
\end{array}
$$
Consequently there exists $C>0$ such that

$$
\left|Du(x)-Ax\right|\leq C|x|^{1-\epsilon},\quad\forall~|x|> 1.
$$
By Newton--Leibnitz formula again we have

$$
\begin{array}{llll}
  \displaystyle \left|
  u(x)-u(e)-\int_0^{|x|-1}x^TA\cdot eds
  \right| & = & \displaystyle
  \left|
  \int_0^{|x-1|}\left(Du((s+1)e)-Ax\right)\cdot eds
  \right|\\
  &\leq & C|x|^{2-\epsilon},\quad\forall~|x|> 1
\end{array}
$$
for some $C>0$. This finishes the proof of Lemma \ref{lem:WeakConvergence} with $\tau\in[0,\frac{\pi}{4}]$.
\end{proof}

\begin{proof}[Proof of Lemma \ref{lem:WeakConvergence} with $\frac{\pi}{4}<\tau<\frac{\pi}{2}$]
By semi-convex condition \eqref{equ:cond:semi-convex}, we have $\lambda_i>-(a+b)$ for $i=1,2$ and consequently by a direct computation as in \cite{Huang-Wang-SelfShrinkingSol,Warren-Calibrations-MA},

$$
  \arctan \frac{\lambda_{i}+a-b}{\lambda_{i}+a+b}=\arctan \frac{\lambda_{i}+a}{b}-\frac{\pi}{4}.
$$
Consequently Eq.~\eqref{Equ-SPL-perturb} with $\tau\in(\frac{\pi}{4},\frac{\pi}{2})$ and semi-convex condition $D^2u>-(a+b)I$ becomes

$$
\arctan\dfrac{\lambda_1(D^2u)+a}{b}+
\arctan\dfrac{\lambda_2(D^2u)+a}{b}=\dfrac{b}{\sqrt{a^2+1}}f(x)+\frac{\pi}{2}\quad\text{in }\mathbb R^2.
$$
Let $v:=\frac{1}{b}(u+\frac{a}{2}|x|^2)$, then $v$ satisfies \eqref{equ:cond:quadratic-growth} for some new $C>0$ and the Lagrangian mean curvature equation
\begin{equation}\label{equ:temp}
\arctan\lambda_1(D^2v)+\arctan\lambda_2(D^2v)=\dfrac{b}{\sqrt{a^2+1}}f(x)+\frac{\pi}{2}\quad\text{in }\mathbb R^2.
\end{equation}
If $\frac{b}{\sqrt{a^2+1}}f(\infty)\not=-\frac{\pi}{2}$, we
 may assume without loss of generality that $\frac{b}{\sqrt{a^2+1}}f(\infty)+\frac{\pi}{2}> 0$, otherwise we consider the equation satisfied by $-v$ as replacement. Then the desired result \eqref{equ:weakConverge} follows from $\tau=\frac{\pi}{2}$ case, which will be proved below.

It remains to prove for $\frac{b}{\sqrt{a^2+1}}f(\infty)+\frac{\pi}{2}=0$ case. For any sufficiently small $\delta>0$, there exists $R_0>0$ such that

$$
\left|\dfrac{b}{\sqrt{a^2+1}}f(x)+\frac{\pi}{2}\right|<\delta,\quad\forall ~ |x|>R_0.
$$
Since $D^2v>-I$, together with the continuity of $D^2u$ in $\mathbb R^2$, we have $D^2v$ bounded on entire $\mathbb R^2$. Consequently $D^2u$ is bounded on entire $\mathbb R^2$. For any sufficiently large $|x|>1$, we set

$$
R:=|x|\quad\text{and}\quad u_R(y):=\dfrac{4}{R^2}u(x+\frac{R}{2}y)\quad\text{in }B_1.
$$
Then $u_R$ is a classical solution of

$$
F_{\tau}(\lambda(D^2u_R(y)))=f(x+\frac{R}{2}y)=:f_R(y)\quad\text{in }B_1.
$$
Since $D^2u_R$ are uniformly (to $R$) bounded, the equations above are uniformly elliptic. Consequently, by the definition of $u_R$,
we have $||u_R||_{C^0(B_1)}$ are also uniformly bounded.
By interior H\"older estimates for second derivatives as in Theorem 17.11 of \cite{Book-Gilbarg-Trudinger}, we have
\begin{equation}\label{equ-temp-8}
||D^2u_R||_{C^{\alpha}(B_{\frac{1}{2}})}\leq C
\end{equation}
for some $\alpha,C>0$ uniform to sufficiently large $R$. Now we turn to the algebraic form of \eqref{equ:temp} i.e.,

\begin{equation}\label{equ-temp-9}
\Delta v=(1-\det D^2v)\cdot \tan\left(\frac{b}{\sqrt{a^{2}+1}} f(x)+\frac{\pi}{2}\right)=:g(x)
\end{equation}
in $\mathbb R^2$.
By approximation and Lemma \ref{Lem-fastConverge}, there exists a solution $v_g$ solving \eqref{equ-temp-9} on $\mathbb R^2\setminus\overline{B_1}$ with estimate

$$
|v_g(x)|\leq C|x|^{2-\zeta}(\ln|x|),\quad\forall~|x|>1
$$
for some  $C>0$. Let
$
v_R(y):=v_g(x+\frac{R}{2}y)
$ in $B_1$, where $R=|x|>2$, then

$$
\Delta v_R=g(x+\frac{R}{2}y)=:g_R(y)\quad\text{in }B_1.
$$
By conditions \eqref{Low-Regular-Condition} and \eqref{equ-temp-8}, we have

$$
||v_R||_{C^0(B_1)}\leq CR^{-\zeta}\ln R\quad\text{and}\quad
||g_R||_{C^{\alpha}(B_{\frac{1}{2}})}\leq CR^{\alpha-\zeta}
$$
for some $C>0$ uniform to $R>2$. By interior Schauder estimates, we have

$$
||v_R||_{C^{2,\alpha}(B_{\frac{1}{3}})}\leq CR^{-\epsilon'}
$$
for some $0<\alpha,\epsilon'<1$ and $C>0$.
Consequently there exists $C>0$ such that

$$
v_g=O_2(|x|^{2-\epsilon'})\quad\text{as }|x|\rightarrow\infty.
$$
Since $v-v_g$ is harmonic on   $\mathbb R^2\setminus\overline{B_1}$ with bounded Hessian matrix, by spherical harmonic expansion as in   \eqref{Equ-def-Wronski}, there exists $A_v\in\mathtt{Sym}(2),\beta_v\in\mathbb R^2$ and $c_v,d_v\in\mathbb R$ such that

$$
v-v_g=\frac{1}{2}x^TA_vx+\beta_v\cdot x+d_v\ln|x|+c_v+O_l(|x|^{-1})
$$
for all $l\in\mathbb N$
as $|x|\rightarrow\infty$. Combining the two asymptotic behavior above, we have

$$
\begin{array}{llll}
u(x) & = & bv(x)-\frac{a}{2}|x|^2\\
&=& \frac{1}{2}x^T(bA_v-aI)x+b\beta_v\cdot x+bd_v\ln|x|+bc_v+bv_g+O_2(|x|^{-1})\\
&=&  \frac{1}{2}x^T(bA_v-aI)x+O_2(|x|^{2-\epsilon'})\\
\end{array}
$$
as $|x|\rightarrow\infty$.
This finishes the proof of \eqref{equ:weakConverge}.
\end{proof}
\begin{proof}[Proof of Lemma \ref{lem:WeakConvergence} with $\tau=\frac{\pi}{2}$]
  Consider the algebraic form of Eq.~\eqref{Equ-SPL-perturb} with $\tau=\frac{\pi}{2}$ i.e.,

$$
\cos f(x)\cdot \Delta u+\sin f(x)\det D^2u=\sin f(x)
$$
in $\mathbb R^2$. By condition  \eqref{equ:cond:fInfinity},  $\cot f(\infty)\not=0$ and consequently we have

$$
\det D^2u+\cot f(\infty)\cdot \Delta u=1+\left(\cot f(\infty)-\cot f(x)\right)\Delta u
$$
in $\mathbb R^2$. Change of variable by setting

$$
v(x):=u(x)+\frac{\cot f(\infty)}{2}|x|^2,
$$
which satisfies
\begin{equation}\label{equ:transformed}
\begin{array}{lll}
\det D^2v & = & \displaystyle (\lambda_1(D^2u)+\cot f(\infty))\cdot
(\lambda_2(D^2u)+\cot f(\infty))\\
&=& \displaystyle 1+\cot^2f(\infty)+
\left(
\cot  f(\infty)-\cot f(x)
\right)\Delta u\\
&=:& g(x)\\
\end{array}
\end{equation}
in $\mathbb R^2$. To obtain the desired results, we shall obtain the asymptotic behavior of $g(x)$ at infinity  and apply Theorem \ref{thm:weakConverge-MA}.

\textbf{Step 1:} We prove the boundedness of $D^2u$ by interior Hessian estimate.

\noindent Since $f(\infty)\in(0,\pi)$, for any sufficiently small $0<\delta<f(\infty)$, there exists $R_0>0$ such that

$$
|f(x)-f(\infty)|<\delta,\quad\forall~|x|>R_0.
$$
By Eq.~\eqref{Equ-SPL-perturb} with $\tau=\frac{\pi}{2}$, for all $i=1,2$,  we have

$$
\arctan\lambda_i+\frac{\pi}{2}>\arctan\lambda_1+\arctan\lambda_2>f(\infty)-\delta>0,
\quad\forall~|x|>R_0.
$$
Consequently by the monotonicity of $\arctan$ function, we have

$$
D^2u>-\cot(f(\infty)-\delta)I
\quad\forall~|x|>R_0.
$$
By \eqref{equ:cond:quadratic-growth} and the quadratic growth condition from above, there exists $C>0$ such that
\begin{equation}\label{temp-1}
|u(x)|\leq C(1+|x|^2),\quad\forall~x\in\mathbb R^2.
\end{equation}
For sufficiently large $|x|>2R_0$, we set
\begin{equation}\label{equ-scaled-1}
R:=|x|>2R_0\quad\text{and}\quad u_R(y):=\dfrac{4}{R^2}u(x+\frac{R}{2}y),\quad y\in B_1,
\end{equation}
where $B_r(x)$ denote the ball centered at $x$ with radius $r$ and $B_r:=B_r(0)$.
Then $u_R\in C^4(B_1)$ satisfies
\begin{equation}\label{equ-scaled-2}
\arctan\lambda_1(D^2u_R)+\arctan\lambda_2(D^2u_R)=f(x+\frac{R}{2}y)=:f_R(y)\quad\text{in }B_1.
\end{equation}
By a direct computation, \eqref{temp-1} implies that there exists a constant $C$ uniform to $R>2R_0$ such that

$$
\sup_{B_1}|u_R|\leq \dfrac{4}{R^2}\sup_{B_{\frac{|x|}{2}}(x)}|u|\leq C.
$$
Together with condition \eqref{Low-Regular-Condition} on $f$ with $\zeta>2$ and $m\geq 3$, we have

$$
\begin{array}{lllll}
||f_R||_{C^{1,1}(B_1)} & = & \displaystyle
\sup_{B_1}\left(|f_R|+|Df_R|\right)+\sup_{y,z\in B_1\atop y\not=z}\dfrac{|Df_R(y)-Df_R(z)|}{|y-z|}\\
&\leq & \displaystyle \sup_{B_1}\left(|f_R|+|Df_R|+|D^2f_R|\right)\\
&\leq & CR^{-\zeta},\\
\end{array}
$$
for some constant $C>0$ uniform to $R>2R_0$. Furthermore, since

$$
\sup_{B_1}\left|f_R(y)-f(\infty)\right|\rightarrow 0\quad\text{as }R\rightarrow\infty.
$$
By $f(\infty)>0$, there exists $\delta>0$ uniform to sufficiently large $R$ such that $|f_R|\geq \delta>0$.

Now we introduce the following interior Hessian estimates for Lagrangian mean curvature equations as in Theorems 1.1 and 1.2 by Bhattacharya \cite{Bhattacharya-HessianEstiamte-LagMeanCur}.
\begin{theorem}\label{thm:HessianEst-1}
  Let $u$ be a $C^4$ solution of \eqref{equ:SPL} in $B_r\subset\mathbb R^n$, where $f\in C^{1,1}(B_r)$ and
  \begin{equation}\label{equ:supercritical}
  |f|\geq \frac{n-2}{2}\pi+\delta
  \end{equation}
  for some $\delta>0$. Then we have

  \begin{equation*}
\left|D^{2} u(0)\right| \leq C_{1} \exp \left(\frac{C_2}{r^{2 n-2}} \left(\underset{B_r(0)}{osc}\frac{u}{r}+1\right)^{2n-2}\right),
\end{equation*}
where $C_{1}, C_{2}$ are positive constants depending on $\|f\|_{C^{1,1}\left(B_{r}\right)}, n$, and $\delta$.
\end{theorem}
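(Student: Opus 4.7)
The plan is to adapt the Warren--Yuan Hessian estimate for constant supercritical phase to the variable $C^{1,1}$ phase setting. View the gradient graph $M=\{(x,Du(x)):x\in B_r\}$ as a Lagrangian submanifold of $\mathbb{R}^{2n}$ with induced metric $g_{ij}=\delta_{ij}+\sum_k u_{ik}u_{jk}$. The Lagrangian angle of $M$ equals $f$, so the mean curvature vector is $H=J\nabla_M f$, with $|H|$ and $|\nabla_M H|$ controlled by $\|f\|_{C^{1,1}}$. Introduce the slope function $V=\sqrt{\det g}=\prod_i\sqrt{1+\lambda_i^2}$, which dominates $|D^2u|$ up to a dimensional factor; the goal is a pointwise bound for $\log V(0)$.

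The core step is a Jacobi-type differential inequality on $M$. A direct computation using the Codazzi equation together with the Lagrangian structure gives
\begin{equation*}
\Delta_g \log V=|A|^2+\langle H,\nabla_M\log V\rangle+R,
\end{equation*}
where $A$ is the second fundamental form and $R$ collects error terms bounded by $\|f\|_{C^{1,1}}$. The decisive use of the supercritical assumption $|f|\geq\frac{n-2}{2}\pi+\delta$ is a Lewy-type rotation of the tangent frame which produces a uniform positive constant $c(n,\delta)>0$ with $|A|^2\geq c(n,\delta)|\nabla_M^2 u|^2$ in the rotated coordinates; this is the algebraic gain from being a fixed distance $\delta$ from criticality. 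Combining these inputs yields $\Delta_g\log V\geq c(n,\delta)|A|^2-C(n,\delta,\|f\|_{C^{1,1}})$.

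With the Jacobi inequality in hand, I would run a Moser-type iteration on $M$. The Michael--Simon--Sobolev inequality, valid since $|H|\in L^\infty$, allows one to bootstrap $L^p$ control of $V$ into $L^q$ control with $q>p$; a cutoff function built from $u$ itself is what brings $\mathrm{osc}_{B_r}(u)$ into the estimate. Iterating to $L^\infty$ yields a bound of the form
\begin{equation*}
\log V(0)\leq\log C_1+\frac{C_2}{r^{2n-2}}\bigl(\mathrm{osc}_{B_r}(u)/r+1\bigr)^{2n-2},
\end{equation*}
where the exponent $2n-2$ reflects the Sobolev gain $n/(n-1)$ per step composed over the iteration steps; exponentiating and using $|D^2u(0)|\leq V(0)$ gives the claimed estimate. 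The main obstacle is securing the Jacobi inequality with a constant $c(n,\delta)>0$ depending only on the supercritical gap: extending the Warren--Yuan algebraic trick from constant to variable phase requires a careful commutator analysis, and this is why $f\in C^{1,1}$ is essentially sharp in this approach --- one needs $\nabla_M H\in L^\infty$ to absorb the error term $R$ without losing the positive $c(n,\delta)|A|^2$ contribution.
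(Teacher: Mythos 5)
The paper does not prove this theorem. It is imported as a black box: the text introducing it reads ``Now we introduce the following interior Hessian estimates for Lagrangian mean curvature equations as in Theorems 1.1 and 1.2 by Bhattacharya,'' i.e.\ the statement is a direct quotation of \cite{Bhattacharya-HessianEstiamte-LagMeanCur}. There is therefore no internal argument to compare your sketch against; the only thing the authors need from this result is its application, two lines later, to the rescaled function $u_R$ on $B_1$ to obtain a uniform bound on $D^2u$.

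That said, your reconstruction is broadly faithful to the proof in the cited source, which does follow the Warren--Yuan lineage: pass to the induced metric on the gradient graph, establish a Jacobi-type inequality for $\log V$ with $V=\prod_i\sqrt{1+\lambda_i^2}$ using the supercritical gap $|f|\geq\frac{n-2}{2}\pi+\delta$, control $H$ and $\nabla_M H$ via $\|f\|_{C^{1,1}}$, and close by a mean-value/Moser iteration built on the Michael--Simon Sobolev inequality, which is where the oscillation of $u$ and the exponent $2n-2$ enter. You also correctly flag the genuine technical obstacle in passing from constant to variable phase --- preserving the coercive $|A|^2$ term while absorbing the commutator errors --- which is precisely what makes the $C^{1,1}$ hypothesis on $f$ appear. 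So your proposal is a sensible outline of the cited proof rather than a competing argument; for the purposes of this paper, though, the correct ``proof'' is simply the citation, and a reader should be pointed to Bhattacharya's paper for the details you sketch.
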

Applying Theorem \ref{thm:HessianEst-1}  to the equation satisfies by $u_R$ in $B_1$, there exists $C>0$ uniform to $R>2R_0$ such that

$$
|D^2u_R(0)| \leq C_1\exp\left(C_2 \left(\underset{B_1(0)}{osc}u_R+1\right)^2\right)\leq C.
$$
Consequently $D^2u$ is bounded on entire $\mathbb R^2$.

\textbf{Step 2:} Now we compute the asymptotic behavior of $g(x)$ at infinity.

\noindent By the definition of $g(x)$ in Eq.~\eqref{equ:transformed} and condition \eqref{Low-Regular-Condition}, since $D^2u$ is bounded on entire $\mathbb R^2$, we have

$$
\begin{array}{llll}
  g(x)-(1+\cot^2f(\infty))& =& (\cot f(\infty)-\cot f(x))\Delta u\\
  &\leq & \dfrac{C}{\sin^2f(\infty)} |f(x)-f(\infty)|\\
  &\leq & C|x|^{-\zeta}\\
\end{array}
$$
for some constant $C>0$. Consequently there exists $C>0$ such that

$$
\begin{array}{lllll}
& \displaystyle \int_{\mathbb R^2\setminus\overline{B_1}}\left|
g^{\frac{1}{2}}(x)-(1+\cot^2f(\infty))^{\frac{1}{2}}
\right|^2dx\\
\leq &\displaystyle C\int_{\mathbb R^2\setminus\overline{B_1}}|g(x)-(1+\cot^2f(\infty))|^2dx\\
\leq &\displaystyle
C\int_{\mathbb R^2\setminus\overline{B_1}}|x|^{-2\zeta}d x\\
< &\infty.\\
\end{array}
$$

\textbf{Step 3:} Obtain the asymptotic behavior of $v$.

\noindent
Since $1+\cot^2f(\infty)>0$, by the continuity of $D^2v$ and $n=2$, we have either $D^2v>0$ or $D^2v<0$ for sufficiently large $|x|>2R_0$. We may assume without loss of generality that $D^2v>0$, otherwise we consider $-v$ instead. Thus by extension result as Theorem 3.2 of \cite{Min-Extension-ConvexFunc}, we can apply Theorem \ref{thm:weakConverge-MA} (see also Corollary 2.3 in \cite{Liu-Bao-2021-Expansion-LagMeanC})  after re-scaling $\widetilde v:=\frac{1}{(1+\cot^2f(\infty))^{\frac{1}{2}}}v$, there exist $A_v\in\mathtt{Sym}(2)$ satisfying $\det A_v=1+\cot^2f(\infty)$ such that

$$
\left|
v-\dfrac{1}{2}x^TA_vx
\right|\leq C|x|^{2-\epsilon}\quad\forall~|x|>1
$$
for some $C>0$ and $\epsilon>0$. By the definition of $v$, we have

$$
\left|
u-\frac{1}{2}x^T(A_v-\cot f(\infty)I)x
\right|\leq C|x|^{2-\epsilon}\quad\forall~|x|>1.
$$
By taking $A:=A_v-\cot f(\infty)I$, it is easy to verify that

$$
\cos f(\infty)  \operatorname{trace}(A)+\sin f(\infty)\det A=\sin f(\infty),
$$
and hence $\arctan\lambda_1(A)+\arctan\lambda_2(A)=f(\infty)$ and the first part of the desired result in Lemma \ref{lem:WeakConvergence} follows immediately.

\textbf{Step 4:}  We prove interior gradient and Hessian estimates by scaling.

\noindent Let $u_R, f_R$ be as in \eqref{equ-scaled-1} and \eqref{equ-scaled-2}. From the results in Step 1, $D^2u$ is bounded on entire $\mathbb R^2$. Consequently equations \eqref{equ-scaled-2} are uniformly (to $R>2R_0$) elliptic. By interior H\"older estimates for second derivatives as in Theorem 17.11 of \cite{Book-Gilbarg-Trudinger}, we have

$$
[D^2u_R]_{C^{\alpha}(B_{\frac{1}{2}})}\leq C
$$
for some $\alpha,C>0$, where $\alpha$ relies only on $||D^2u_R||_{C^0(B_1)}$ and $C$ relies only on $||u_R||_{C^2(B_1)}$ and $
||f_R||_{C^1(B_1)}$. Consequently

$$
||D^2u_R||_{C^{\alpha}(B_{\frac{1}{2}})}<C
$$
for some $C>0$ for all $R>2R_0$.
We would like to mention that  $n=2$ is necessary to apply interior estimates as  Theorem 17.11 in \cite{Book-Gilbarg-Trudinger}. For higher dimensions, it is generally required that the operator has a concavity structure (see for instance Theorem 17.14 in \cite{Book-Gilbarg-Trudinger} and Theorem 8.1 in \cite{Book-Caffarelli-Cabre-FullyNonlinear}).

  To obtain the desired result, it remains to prove the gradient and Hessian estimate on the difference between $u$ and $\frac{1}{2}x^TAx$. Let
\begin{equation}\label{equ-scaled-3}
w(x):=u(x)-\frac{1}{2}x^TAx\quad\text{and}\quad
w_R(y):=\frac{4}{R^2}w(x+\frac{R}{2}y)\quad\forall~ y\in B_1.
\end{equation}
From the results in Steps 1-4, there exists $C>0$ uniform to all  $R>2R_0$ such that

$$
||u_R||_{L^{\infty}(B_1)}\leq C\quad\text{and}\quad ||w_R||_{L^{\infty}(B_1)}\leq CR^{-\epsilon}.
$$
Applying Newton--Leibnitz formula between \eqref{equ-scaled-2} and $\arctan\lambda_1(A)+\arctan\lambda_2(A)=f(\infty)$, we have
\begin{equation}\label{equ-scaled-5}
a_{ij}^RD_{ij}w_R=f_R(y)-f(\infty)\quad\text{in }B_1,
\end{equation}
where

$$
a_{ij}^R(y)=\int_0^1D_{M_{ij}}F_{\tau}(A+tD^2w_R(y))dt,
$$
are uniformly elliptic and having uniformly bounded (to $R>2R_0$) $C^{\alpha}$ norm.
Hereinafter, we let $[a_{ij}]$ denote the 2-by-2 matrix with the $i,j$-position being $a_{ij}$ and $D_{M_{ij}}F_{\tau}(M)$ denote the value of partial derivative of $F_{\tau}(\lambda(M))$ with respect to $M_{ij}$ variable at $M=[M_{ij}]$.
By condition \eqref{Low-Regular-Condition}, there exists uniform $C>0$ such that
\begin{equation}\label{equ-temp-1}
\left\|f_{R}-1\right\|_{L^{\infty}\left(B_{1}\right)}+\sum_{k=1}^{m-1}\left\|D^{k} f_{R}\right\|_{C^{\alpha}\left(B_{1}\right)} \leq C R^{-\zeta}.
\end{equation}
By interior Schauder estimates as Theorem 6.2 in \cite{Book-Gilbarg-Trudinger},

$$
||w_R||_{C^{2,\alpha}(B_{\frac{1}{2}})}\leq C\left(
||w_R||_{L^{\infty}(B_1)}+||f_{R}-f(\infty)||_{C^{\alpha}(\overline{B_1})}
\right)\leq CR^{-\min\{\epsilon,\zeta\}}.
$$
This finishes the proof of Lemma \ref{lem:WeakConvergence} by choosing $\epsilon$ as the minimum of $\epsilon$ and $\zeta$.
\end{proof}

\section{Asymptotic behavior and expansions of $u$ at infinity}\label{seclabel-AsyExpan}

In this section, we prove the asymptotic behavior and expansions of solution $u$ at infinity following the line of iteration method as by Bao--Li--Zhang \cite{Bao-Li-Zhang-ExteriorBerns-MA} with an improvement from spherical harmonic expansion.

\begin{lemma}\label{lem:Initial}
  Let $u,f$ be as in Theorem \ref{thm:perturb-dim2} and

  $$
  w(x):=u(x)-\frac{1}{2}x^TAx,
  $$
  where $A\in\mathtt{Sym}(2)$ is from Lemma \ref{lem:WeakConvergence}. Then there exist $C,\alpha,\epsilon'>0$ such that
  \begin{equation}\label{Chap6-Asy-Initial}
  \left\{ \begin{array} { l } { \left| D ^ { k } w ( x ) \right| \leq C |x | ^ { 2 - k - \epsilon' } , } \\ { \frac { \left| D ^ { m + 1 } w \left( x _ { 1 } \right) - D ^ { m + 1 } w \left(x _ { 2 } \right) \right| } { \left| x _ { 1 } - x _ { 2 } \right| ^ { \alpha } } \leq C \left| x_ { 1 } \right| ^ { 1 - m - \epsilon' - \alpha },} \end{array} \right.
\end{equation}
for all  $ | x | > 2, k = 0 , \ldots , m + 1$ and $\left| x_ { 1 } \right| > 2,x _ { 2 } \in B _ { \left| x _ { 1} \right| / 2 } \left( x _ { 1 } \right)$.
\end{lemma}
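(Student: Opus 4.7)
The plan is to write $w$ as the solution of a linear elliptic equation, rescale it at each point $x_0$ in the exterior, and apply interior Schauder estimates iteratively to gain the required derivative bounds. Since $F_\tau(\lambda(D^2u))=f(x)$ and $F_\tau(\lambda(A))=f(\infty)$, integrating along the segment from $A$ to $A+D^2w(x)=D^2u(x)$ yields
$$a_{ij}(x)D_{ij}w(x)=f(x)-f(\infty),\qquad a_{ij}(x):=\int_0^1 D_{M_{ij}}F_\tau(\lambda(A+tD^2w(x)))\,dt.$$
Lemma \ref{lem:WeakConvergence} gives $D^2w(x)\to 0$ as $|x|\to\infty$, so $a_{ij}(x)\to D_{M_{ij}}F_\tau(\lambda(A))$, which is positive definite (equal to $P^{-1}$ in \eqref{equ:def-matrixQ}). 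Hence the equation is uniformly elliptic on $\mathbb R^2\setminus B_{R_0}$ for some large $R_0$.

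For each $|x_0|>2R_0$, set $R:=|x_0|$ and introduce the rescaled functions
$$w_R(y):=\frac{4}{R^2}w\bigl(x_0+\tfrac{R}{2}y\bigr),\qquad f_R(y):=f\bigl(x_0+\tfrac{R}{2}y\bigr),\qquad y\in B_1.$$
Scaling preserves the Hessian, so $w_R$ satisfies $a_{ij}^R(y)D_{ij}w_R(y)=f_R(y)-f(\infty)$ on $B_1$, with $a_{ij}^R$ uniformly elliptic and, thanks to the smoothness of $F_\tau$ and the uniform bound on $D^2w$, uniformly bounded in $C^{0,\alpha}(B_1)$. Lemma \ref{lem:WeakConvergence} yields $\|w_R\|_{L^\infty(B_1)}\leq CR^{-\epsilon}$, while \eqref{Low-Regular-Condition} gives $\|f_R-f(\infty)\|_{C^{m-1,\alpha}(B_1)}\leq CR^{-\zeta}$. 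The interior Schauder estimate then produces the base step
$$\|w_R\|_{C^{2,\alpha}(B_{7/8})}\leq C\bigl(\|w_R\|_{L^\infty(B_1)}+\|f_R-f(\infty)\|_{C^\alpha(B_1)}\bigr)\leq CR^{-\epsilon'},\qquad\epsilon':=\min\{\epsilon,\zeta\}>0.$$

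To push the estimate up to order $m+1$, I differentiate the original equation $F_\tau(\lambda(D^2u))=f$ successively in $x_k$; this produces linear elliptic equations for each $D^k w$ with coefficients that are smooth nonlinear functions of $D^2w$ and inhomogeneous terms involving $D^\ell f$ for $\ell\leq k$. The $C^{2,\alpha}$-bound on $w_R$ yields $C^{1,\alpha}$-bounds on these coefficients, allowing Schauder to be applied again on a slightly smaller ball, and iterating $m-1$ times gives $\|w_R\|_{C^{m+1,\alpha}(B_{1/2})}\leq CR^{-\epsilon'}$. Undoing the scaling via the identity $D^k_xw(x_0+\tfrac R2 y)=2^{k-2}R^{2-k}D^k_yw_R(y)$ at $y=0$ translates this into the pointwise bound $|D^kw(x_0)|\leq C|x_0|^{2-k-\epsilon'}$ for all $k=0,\ldots,m+1$; since $|x_1-x_2|^\alpha=(\tfrac R2)^\alpha|y_1-y_2|^\alpha$ under the scaling, the rescaled Hölder bound on $D^{m+1}w_R$ translates directly into the Hölder estimate of \eqref{Chap6-Asy-Initial} on $B_{|x_1|/4}(x_1)$, and a finite chain-covering extends this to $B_{|x_1|/2}(x_1)$. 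The main obstacle lies in the bootstrap: each differentiation multiplies and nests nonlinear products in $D^kw$, but these are controlled because $D^2u$ stays in a fixed compact set where all derivatives of $F_\tau(\lambda(\cdot))$ are smooth and bounded, and because the lower-order factors are already $O(R^{-\epsilon'})$ from the preceding step, so they can be absorbed into the right hand side at the next level without losing the decay rate.
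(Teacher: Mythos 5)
Your overall architecture — write the Newton--Leibniz linearization $a_{ij}D_{ij}w=f-f(\infty)$, rescale at each exterior point, apply interior Schauder, then differentiate and iterate — matches the paper's strategy. However, there is a genuine gap in the base Schauder step: you claim the coefficients $a_{ij}^R(y)=\int_0^1 D_{M_{ij}}F_\tau(\lambda(A+tD^2w_R(y)))\,dt$ are uniformly bounded in $C^{0,\alpha}(B_1)$ ``thanks to the smoothness of $F_\tau$ and the uniform bound on $D^2w$.'' A uniform $L^\infty$ bound on $D^2w_R$ only gives an $L^\infty$ bound on the coefficients, not a H\"older bound; since the coefficients are smooth functions of $D^2w_R$, you need $[D^2w_R]_{C^\alpha(B_1)}\leq C$ uniformly in $R$ to make them $C^\alpha$. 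Trying to get that from Schauder itself is circular. Worse, even a global bound $\|D^2w\|_{C^\alpha(\mathbb R^2)}\leq C$ does not help: under the rescaling $w_R(y)=\tfrac{4}{R^2}w(x_0+\tfrac{R}{2}y)$ one has $[D^2w_R]_{C^\alpha(B_1)}=(\tfrac{R}{2})^\alpha[D^2w]_{C^\alpha(B_{R/2}(x_0))}$, which blows up with $R$.

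The missing ingredient is the $n=2$ interior $C^{2,\alpha}$ a priori estimate for uniformly elliptic fully nonlinear equations (Gilbarg--Trudinger, Theorem 17.11), applied to the scaled equation $F_\tau(\lambda(D^2u_R))=f_R$ in $B_1$: since $D^2u_R$ is uniformly bounded, that theorem yields $\|D^2u_R\|_{C^\alpha(B_{2/3})}\leq C$ with constants independent of $R$, which is exactly the estimate needed to justify the H\"older bound on $a_{ij}^R$ and start the Schauder bootstrap. The paper invokes this explicitly (its equation \eqref{equ-temp-2}), and notes this is where $n=2$ is essential (in higher dimensions one would need a concavity structure). You should insert this step before the first application of Schauder; once it is in place, the rest of your iteration argument (differentiating the equation to bound $D^kw_R$ for $k\leq m+1$, then unscaling) is essentially the paper's.
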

\begin{proof}
  For sufficiently large $|x|>1$, we set $R,u_R,f_R,w_R$ as in the proof of Lemma \ref{lem:WeakConvergence} i.e., \eqref{equ-scaled-1}, \eqref{equ-scaled-2} and \eqref{equ-scaled-3}. As proved in Lemma \ref{lem:WeakConvergence}, together with $u\in C^2(\mathbb R^2)$ we have

  $$
  |D^kw(x)|\leq C|x|^{2-k-\epsilon},\quad\forall~|x|>1
  $$
  for some $C>0$ and $\epsilon>0$ from Lemma \ref{lem:WeakConvergence} for $k=0,1,2$. It remains to prove the higher order derivatives following the  Step 4 in the proof of Lemma \ref{lem:WeakConvergence}.

  In fact, we consider the scaled equation
  \begin{equation}\label{equ-scaled-4}
  F_{\tau}(\lambda(D^2u_R))=f_R(y)\quad\text{in }B_1.
  \end{equation}
  Since $D^2u_R$ are uniformly (to $R$) bounded, $F_{\tau}(\lambda(D^2u_R))$
  are uniformly elliptic. By Theorem 17.11 in \cite{Book-Gilbarg-Trudinger} there exist $\alpha,C,R_0>0$ such that
  \begin{equation}\label{equ-temp-2}
  ||D^2u_R||_{C^{\alpha}(B_{\frac{2}{3}})}\leq C,\quad\forall~R>2.
  \end{equation}
  Apply Newton--Leibnitz formula between
  \eqref{equ-scaled-4} and $F_{\tau}(\lambda(A))=f(\infty)$ to obtain
  linearized equation \eqref{equ-scaled-5}. By
  the boundedness of $D^2u_R$ and
  estimate \eqref{equ-temp-2}, the coefficients $a_{ij}^R(y)$ are uniformly elliptic and having finite $C^{\alpha}$-norm for some $\alpha>0$ uniform to $R>2$. Together with \eqref{equ-temp-1}, by interior Schauder estimates we have

$$
||w_R||_{C^{2,\alpha}(B_{\frac{1}{2}})}\leq C\left(
||w_R||_{L^{\infty}(B_{\frac{2}{3}})}+||f_{R}-f(\infty)||_{C^{\alpha}(\overline{B_{\frac{2}{3}}})}
\right)\leq CR^{-\min\{\epsilon,\zeta\}}
$$
for some $C>0$ uniform to all $R>2$.

It remains to obtain higher order derivative estimates.  For any $e\in\partial B_1$, we act partial derivative $D_e$ to both sides of \eqref{equ-scaled-4}
  and obtain

  $$
  \widetilde a_{ij}^RD_{ij}(D_ew_R)=D_ef_R(y)\quad\text{in }B_1,\quad\text{where}\quad
  \widetilde a_{ij}^R(y):=D_{M_{ij}}F_{\tau}(D^2u_R(y)).
  $$
By the boundedness of $D^2u_R$ and estimate \eqref{equ-temp-2}, the coefficients $\widetilde a_{ij}^R$ are uniformly elliptic with uniformly bounded $C^{\alpha}$-norm to all $R>2$. By interior Schauder estimate again and the arbitrariness of $e\in\partial B_1$, we have

\begin{equation*}
\left\|w_{R}\right\|_{C^{3, \alpha}\left(\overline{B}_{1 / 3}\right)} \leq C\left(\left\|w_{R}\right\|_{L^{\infty}\left(\overline{B}_{2/3}\right)}+| D f_{1, R} \|_{C^{\alpha}\left(\overline{B}_{2/ 3}\right)}\right) \leq C R^{-\min\{\epsilon,\zeta\}}.
\end{equation*}
By taking further derivatives of Eq.~\eqref{equ-scaled-4}, higher order derivatives follow from Schauder estimate and this finishes the proof of \eqref{Chap6-Asy-Initial}.
\end{proof}

Next we prove an iteration type lemma that improves the estimates in Lemma \ref{lem:Initial}. The result follows as in Lemma 2.2 in Bao--Li-Zhang \cite{Bao-Li-Zhang-ExteriorBerns-MA} with minor modifications.
\begin{lemma}\label{lem:iteration}
  Let $u\in C^2(\mathbb R^2)$ be a classical solution of
  \begin{equation}\label{equ-generalFull}
    F(D^2u)=f(x)\quad\text{in }\mathbb R^2.
  \end{equation}
  Suppose $F$ is smooth up to the boundary of the range of $D^2u$ and is uniformly elliptic. Suppose further that $f\in C^m(\mathbb R^2)$ satisfies \eqref{Low-Regular-Condition} for some $\zeta>2,m\geq 3$. If there exists a quadratic polynomial $v$ satisfying $F(D^2v)=f(\infty)$ and $w:=u-v$ satisfies
  \begin{equation}\label{Chap6-equ-converge1}
  \left\{ \begin{array} { l } { \left| D ^ { k } w ( x ) \right| \leq C | x | ^ { 2 - \epsilon - k }(\ln|x|)^{p_0} , } \\ { \frac { \left| D ^ { m + 1 } w \left( x _ { 1 } \right) - D ^ { m + 1 } w \left( x_ { 2 } \right) \right| } { \left| x _ { 1 } - x _ { 2 } \right| ^ { \alpha } } \leq C \left| x _ { 1 } \right| ^ { 1 - m - \epsilon - \alpha }(\ln|x|)^{p_0}} \end{array} \right.
  \end{equation}
  for some $0<\epsilon<\frac{1}{2}$ and $p_0\in\mathbb N$
  for all  $ | x | > 2, k = 0 , \ldots , m + 1$ and $\left| x_ { 1 } \right| > 2,x _ { 2 } \in B _ { \left| x _ { 1} \right| / 2 } \left( x _ { 1 } \right)$.
  Then
   \begin{equation}\label{Chap6-equ-converge2}
  \left\{ \begin{array} { l } { \left| D ^ { k } w ( x ) \right| \leq C | x | ^ { 2 - 2 \epsilon - k }
  (\ln|x|)^{2p_0}
  ,} \\ { \frac { \left| D ^ { m + 1 } w \left( x _ { 1 } \right) - D ^ { m + 1 } w \left( x _ { 2 } \right) \right| } { \left| x _ { 1 } - x _ { 2 } \right| ^ { \alpha } } \leq C \left| x _ { 1 } \right| ^ { 1 - m - 2 \epsilon - \alpha }
  (\ln|x|)^{2p_0}
  ,} \end{array} \right.
  \end{equation}
for all  $ | x | > 2, k = 0 , \ldots , m + 1$ and $\left| x_ { 1 } \right| > 2,x _ { 2 } \in B _ { \left| x _ { 1} \right| / 2 } \left( x _ { 1 } \right)$.
\end{lemma}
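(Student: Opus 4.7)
The plan is to linearize \eqref{equ-generalFull} about the quadratic profile $v$, convert the resulting constant-coefficient linear equation into a Poisson equation via an affine change of variables, apply the exterior Poisson estimates of Section \ref{seclabel-Poisson}, and then pin down the harmonic ambiguity using a spherical harmonic expansion. The derivative and Hölder bounds in \eqref{Chap6-equ-converge2} will then be recovered by the rescaling/Schauder procedure already used in Lemma \ref{lem:Initial}.

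First, writing $w := u - v$ and applying Newton--Leibniz to $F(D^2u) - F(D^2v)$, one would rewrite \eqref{equ-generalFull} as
$$
a_{ij}(x) D_{ij} w = f(x) - f(\infty), \qquad a_{ij}(x) := \int_0^1 \partial_{M_{ij}} F\bigl(D^2 v + t D^2 w(x)\bigr)\, dt.
$$
Since $D^2 w(x) \to 0$ by \eqref{Chap6-equ-converge1} and $F$ is smooth and uniformly elliptic near $D^2 v$, the coefficients $a_{ij}(x)$ converge to the constant positive symmetric matrix $a^\infty_{ij} := \partial_{M_{ij}} F(D^2 v)$ with $|a_{ij}(x) - a^\infty_{ij}| = O(|x|^{-\epsilon}(\ln|x|)^{p_0})$. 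Transferring the perturbation to the right hand side produces the constant-coefficient equation $a^\infty_{ij} D_{ij} w = g$ with $g := (f - f(\infty)) + (a^\infty_{ij} - a_{ij}(x)) D_{ij} w$, and \eqref{Low-Regular-Condition}, \eqref{Chap6-equ-converge1} together with $\zeta > 2 > 2\epsilon$ (using $\epsilon < 1/2$) give $g = O_{m-1}(|x|^{-2\epsilon}(\ln|x|)^{2p_0})$.

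Next, I would diagonalize $a^\infty$ by an affine change of variables $y = Tx$ to transform the equation into $\Delta \widetilde w(y) = \widetilde g(y)$ with $\widetilde g = O(|y|^{-2\epsilon}(\ln|y|)^{2p_0})$. Because $\epsilon < 1/2$ forces $2\epsilon \in (0,1) \setminus \mathbb{N}_*$, Lemmas \ref{Lem-fastConverge} and \ref{Lem-fastConverge-2} supply a solution $v_{\widetilde g}$ of $\Delta v_{\widetilde g} = \widetilde g$ on $\mathbb{R}^2 \setminus \overline{B_1}$ satisfying $v_{\widetilde g} = O_{m+1}(|y|^{2-2\epsilon}(\ln|y|)^{2p_0})$. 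The difference $h := \widetilde w - v_{\widetilde g}$ is harmonic outside a ball and inherits the growth $|h(y)| \leq C|y|^{2-\epsilon}(\ln|y|)^{p_0}$ from \eqref{Chap6-equ-converge1}. Because this a priori growth is strictly sub-quadratic, the spherical harmonic expansion of a harmonic function on an exterior domain of $\mathbb{R}^2$ kills every mode of order $r^k$ with $k \geq 2$, leaving $h = O_{m+1}(|y|)$; since $2 - 2\epsilon > 1$, this $h$ is absorbed into the bound for $v_{\widetilde g}$, and pulling back through $T$ delivers the $C^0$ part of \eqref{Chap6-equ-converge2}.

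Finally, to obtain the remaining derivative and Hölder estimates, I would rerun the rescaling from Lemma \ref{lem:Initial}: for each $R = |x| > 2$, set $u_R(y) := (4/R^2)u(x + Ry/2)$ and $w_R(y) := (4/R^2)w(x + Ry/2)$ on $B_1$. The $C^0$ bound just established gives $\|w_R\|_{L^\infty(B_1)} \leq C R^{-2\epsilon}(\ln R)^{2p_0}$, the assumption \eqref{Chap6-equ-converge1} yields uniform $C^\alpha$ control on $D^2 u_R$ and hence on the linearized coefficients, and \eqref{Low-Regular-Condition} gives $\|f_R - f(\infty)\|_{C^{m-1,\alpha}(\overline{B_{2/3}})} \leq C R^{-\zeta}$. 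Applying interior Schauder estimates iteratively to $w_R$ and then to its directional derivatives $D_e w_R, D_e^2 w_R, \ldots$ for arbitrary $e \in \partial B_1$ delivers $\|w_R\|_{C^{m+1,\alpha}(\overline{B_{1/3}})} \leq C R^{-2\epsilon}(\ln R)^{2p_0}$, which unscales to \eqref{Chap6-equ-converge2}. The main obstacle is the control of the harmonic remainder $h$: the condition $\epsilon < 1/2$ is crucial here, as it keeps $2\epsilon$ strictly below the first resonant integer in Lemma \ref{Lem-fastConverge} (so that no new logarithm is produced) while simultaneously keeping the growth of $h$ strictly sub-quadratic so that all degree-$\geq 2$ harmonic modes vanish.
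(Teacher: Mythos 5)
Your proposal is correct and reaches the stated conclusion, but it decomposes the problem differently from the paper. The paper differentiates the equation twice and works with the single second derivative $h_1 := D_{kl}u$: after solving the associated exterior Poisson problem, the harmonic remainder $\widetilde h_1 - \widetilde h_2 - \delta_{kl}$ \emph{vanishes} at infinity, so the spherical harmonic expansion immediately gives an $O(|x|^{-1})$ decay, and the bounds on $w$ and its lower derivatives are then recovered by integration. You instead apply Newton--Leibniz once to get a constant-coefficient equation for $w$ itself, so your harmonic remainder $h$ only has \emph{sub-quadratic} growth $O(|y|^{2-\epsilon}(\ln|y|)^{p_0})$ rather than decay; the proof then hinges on two observations which you correctly identify: (i) sub-quadratic growth kills all spherical-harmonic modes of order $\geq 2$, leaving $h$ controlled by the degree-$1$ term, and (ii) $\epsilon < \frac{1}{2}$ guarantees $2 - 2\epsilon > 1$, so this linear-plus-log-plus-constant remainder is dominated by the Poisson-solution bound $|y|^{2-2\epsilon}(\ln|y|)^{2p_0}$. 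Both routes use the same ingredients (Lemmas \ref{Lem-fastConverge}, \ref{Lem-fastConverge-2}, spherical harmonics, rescaled Schauder), but the paper's double-differentiation trick trades an extra differentiation for a cleaner (decaying) harmonic remainder, while yours works directly with $w$ at the cost of the slightly more delicate absorption argument.

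Two small cautions on your writeup. First, your statement $h = O_{m+1}(|y|)$ is not quite tight (e.g.\ $D^2 h = O(|y|^{-2})$, not merely $O(|y|^{-1})$), though the stated weaker bound is all you use and the absorption $|D^k h| = O(|y|^{1-k}) \lesssim |y|^{2-2\epsilon-k}(\ln|y|)^{2p_0}$ indeed holds for every $k$ precisely because $2-2\epsilon > 1$. Second, your $v_{\widetilde g} = O_{m+1}(\cdots)$ should be $O_m(\cdots)$: you only get $g = O_{m-1}(\cdots)$ (the $m$-th derivative of $g$ would require $D^{m+2}w$, which \eqref{Chap6-equ-converge1} does not control), and Lemma \ref{Lem-fastConverge-2} then gives one extra order. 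Neither issue affects the outcome, since the final $C^{m+1,\alpha}$ regularity is recovered independently from the rescaled Schauder iteration, exactly as in Lemma \ref{lem:Initial}.
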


\begin{proof}
Acting partial derivative $D_k$ to both sides of Eq.~ \eqref{equ-generalFull}, we have
\begin{equation}\label{chap6-equ-diff-1}
a_{ij}(x)D_{ij}(D_ku(x))=D_kf(x),\quad\text{where}\quad
a_{ij}(x)=D_{M_{ij}}F(D^2u(x)).
\end{equation}
By the assumptions on $F$ and $u$, $a_{ij}(x)$ are uniformly elliptic coefficients. By the first formula of  \eqref{Chap6-equ-converge1}, since $m\geq 3$, there exists $C>0$ such that

$$
|a_{ij}(x)-D_{M_{ij}}F(A)|\leq C|x|^{-\epsilon}(\ln|x|)^{p_0},\quad
|Da_{ij}(x)|\leq C|x|^{-1-\epsilon}(\ln|x|)^{p_0}
$$
for all $|x|>2$.
For the given $\alpha\in(0,1)$, together with the second formula in \eqref{Chap6-equ-converge1} we have

$$
\frac { \left| D a _ { i j } \left( x _ { 1 } \right) - D a _ { i j } \left( x _ { 2 } \right) \right| } { \left| x _ { 1 } - x _ { 2 } \right| ^ { \alpha } } \leq C \left| x _ { 1 } \right| ^ { - 1 - \epsilon - \alpha }(\ln|x_1|)^{p_0} , \quad \left| x _ { 1 } \right| > 2, \quad x _ { 2 } \in B _ { \left| x _ { 1 } \right| / 2 } \left( x _ { 1 } \right).
$$

For any $l=1,2$, we act partial derivative $D_l$ to both sides of \eqref{chap6-equ-diff-1}. Let $h_1:=D_{kl}u$, then we have

$$
D_{M_{ij},M_{qr}}F(D^2u)D_{ijk}uD_{qrl}u+
D_{M_{ij}}F(D^2u)D_{ij}h_1=D_{kl}f(x),
$$
i.e.,

$$
a_{ij}(\infty)D_{ij}h_1=f_2(x)\quad\text{in }\mathbb R^2,
$$
where

$$
f_2(x):=
D_{kl}f-D_{M_{ij},M_{qr}}F(D^2u)D_{ijk}uD_{qrl}u+
\left(a_{ij}(\infty)-a_{ij}(x)\right)D_{ij}h_1.
$$
Notice that $[a_{ij}(\infty)]=[D_{M_{ij}}F(A)]$ is a positive symmetric matrix, we set $Q:=[a_{ij}(\infty)]^{\frac{1}{2}}$ and  $\widetilde h_1(x):=h_1(Qx)$. Since trace is invariant under cyclic permutations,
\begin{equation}\label{Chap6-equ-temp-Poisson1}
 \Delta\widetilde h_1(x)=f_2(Qx)=:\widetilde f_2(x)
\end{equation}
in $Q^{-1}(R^2\setminus\overline{B_1})$.
Since $Q$ is invertible, by $\zeta>2>2\epsilon$ and   \eqref{Chap6-equ-converge1}, we have

$$
\begin{array}{lllll}
|\widetilde f_2(x)| &\leq & |D_{kl}f(Qx)|+C|D_{ijk}u(Qx)|\cdot |D_{qkl}u(Qx)|+C|a_{ij}(\infty)-a_{ij}(Qx)|\\
&\leq & C|x|^{-2-\zeta}+C|x|^{-2-2\epsilon}(\ln|x|)^{2p_0}\\
&\leq &C|x|^{-2-2\epsilon}(\ln|x|)^{2p_0}\\
\end{array}
$$
in $x\in Q^{-1}(\mathbb R^2\setminus\overline{B_1})$. By Lemmas \ref{Lem-fastConverge}
and  \ref{Lem-fastConverge-2} and  $0<2\epsilon<1$,  there exists a function $\widetilde h_2$ satisfying \eqref{Chap6-equ-temp-Poisson1} on an exterior domain with estimate

$$
|D^k\widetilde h_2(x)|\leq C|x|^{-2\epsilon-k}(\ln|x|)^{2p_0},\quad\forall~k=0,1.
$$
By the definition of $\tilde h_1$ and the first line of \eqref{Chap6-equ-converge1},  $\widetilde h_1-\widetilde h_2$ is harmonic on exterior domain of $\mathbb R^2$ with vanishing speed

$$
\widetilde h_1-\widetilde h_2-\delta_{kl}=O(|x|^{-\epsilon}(\ln|x|)^{p_0})\quad\text{as }|x|\rightarrow\infty.
$$
By spherical harmonic expansion as in \eqref{Equ-def-Wronski}, see also formula (2.23) in \cite{Bao-Li-Zhang-ExteriorBerns-MA}, there exists $C>0$ such that

$$
|\widetilde h_1-\widetilde h_2-\delta_{kl}|\leq C|x|^{-1}
$$
for all $|x|>2$. By taking $h_2(x):=\widetilde h_2(Q^{-1}x)$, there exists $C>0$ such that

$$
\left|h_1(x)-h_2(x)-A_{kl}\right|\leq C|x|^{-1},\quad\forall~|x|>2
$$
and hence

$$
|D^kw(x)|\leq C|x|^{2-2\epsilon-k}(\ln|x|)^{2p_0},\quad\forall~|x|>2.
$$
By taking higher order derivatives as in the proof of Lemma \ref{lem:Initial}, this finishes the proof of the first formula in \eqref{Chap6-equ-converge2}.

It remains to prove the H\"older semi-norm part. In fact for sufficiently large
$|x|$, we set $R:=|x|$ and

\begin{equation*}
h_{2, R}(y):=\widetilde h_{2}\left(x+\frac{R}{4} y\right), \quad f_{2, R}(y)=\frac{R^{2}}{16} \widetilde f_{2}\left(x+\frac{R}{4} y\right), \quad|y| \leq 2.
\end{equation*}
By condition \eqref{Low-Regular-Condition} on $f(x)$ and   \eqref{Chap6-equ-converge1},
for any large $|x_1|$ and   $x_2\in B_{|x_1|/2}(x_1)$ with $x_1\not=x_2$, we have

$$
\begin{array}{lllll}
  &\dfrac{\left|\widetilde f_{2}\left(x_{1}\right)-\widetilde{f_{2}}\left(x_{2}\right)\right|}{\left|x_{1}-x_{2}\right|^{\alpha}}\\ \leq &
  \dfrac{\left|\widetilde D_{kl}f\left(Qx_{1}\right)-D_{kl}f\left(Qx_{2}\right)\right|}{\left|x_{1}-x_{2}\right|^{\alpha}}
\\  &+
  \dfrac{\left|\left(a_{i j}(\infty)-a_{i j}(Qx_1)\right) D_{i j} h_{1}\left(Qx_{1}\right)-\left(a_{i j}(\infty)-a_{i j}(Qx_2)\right) D_{i j} h_{1}\left(Qx_{2}\right)\right|}{\left|x_{1}-x_{2}\right|^{\alpha}}\\
  &+
  \dfrac{\left|F_{M_{i j}, M_{q r}}\left(D^{2} u\right) D_{i j k} u D_{q r l} u\left(Qx_{1}\right)-F_{M_{i j}, M_{q r}}\left(D^{2} u\right) D_{i j k} u D_{q r l} u\left(Qx_{2}\right)\right|}{\left|x_{1}-x_{2}\right|^{\alpha}}\\
  \leq & C|x_1|^{-\zeta-2-\alpha}+C|x_1|^{-1-\epsilon}(\ln|x_1|)^{p_0}\cdot |x_1|^{-1-\epsilon-\alpha}(\ln|x_1|)^{p_0}\\
  \leq & C|x_1|^{-2-2\epsilon-\alpha}(\ln|x_1|)^{2p_0}.\\
\end{array}
$$
Thus by a direct computation, there exists $C>0$ uniform to all $R>1$ such that

$$
||f_{2,R}||_{C^{\alpha}(\overline{B_1})}\leq CR^{-2\epsilon}(\ln|x|)^{2p_0}.
$$
By the interior Schauder estimates of Poisson equation, we have

\begin{equation*}
\left\|h_{2, R}\right\|_{C^{2, \alpha}\left(B_{1}\right)} \leq C\left(\left\|h_{2, R}\right\|_{L^{\infty}\left(B_{2}\right)}+\left\|f_{2, R}\right\|_{C^{\alpha}\left(B_{2}\right)}\right) \leq C R^{-2 \epsilon}(\ln R)^{2p_0}
\end{equation*}
By the definition of $h_{2,R}$ and the non-degeneracy of $Q$ we have the second formula in  \eqref{Chap6-equ-converge2}.

For higher order derivatives, the results follow from taking further derivatives to both sides of the equation and apply interior Schauder estimates as in the proof of Lemma \ref{lem:Initial}.
\end{proof}

Now we are able to prove the asymptotic behavior at infinity by iteration. More explicitly, we have the following results.
\begin{proposition}
  Let $u,f$ be as in Theorem \ref{thm:perturb-dim2}, then there exists $A\in\mathtt{Sym}(2)$ satisfying $F_{\tau}(\lambda(A))=f(\infty)$ and \eqref{equ:cond:semi-convex}, $b\in\mathbb R^2$ and $d,c,R_1\in\mathbb R$
  such that
  \eqref{equ:asymptotic-behav} holds as $|x|\rightarrow\infty$, where $Q$ is given by \eqref{equ:def-matrixQ}. Furthermore, when $\zeta>3$, then there exist $d_1,d_2$ such that \eqref{equ:asymptotic-expan} holds.
\end{proposition}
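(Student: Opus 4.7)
The plan is to bootstrap the initial estimate of Lemma \ref{lem:Initial} by iterating Lemma \ref{lem:iteration}, and to peel off the lower-order corrections (linear, constant, logarithmic, and the degree-one harmonic) at the appropriate stages via spherical harmonic expansion of the harmonic remainder on the exterior of a disk in $\mathbb R^2$. I would take $A\in\mathtt{Sym}(2)$ from Lemma \ref{lem:WeakConvergence} and set $w:=u-\tfrac12 x^TAx$. Starting from $|D^kw(x)|\le C|x|^{2-k-\epsilon'}$ with some small $\epsilon'>0$, apply Lemma \ref{lem:iteration} a finite number of times: each application doubles the decay exponent (possibly doubling the log power), so after sufficiently many steps the exponent on $w$ exceeds $1-\delta$ for any preassigned $\delta>0$.

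With $Q:=P^{1/2}$, the transformed function $\widetilde w(y):=w(Qy)$ then satisfies (approximately) a Poisson equation on an exterior domain, with inhomogeneity obeying the bound produced by the iteration. Combining a particular solution furnished by Lemmas \ref{Lem-fastConverge}--\ref{Lem-fastConverge-2} with the spherical harmonic expansion \eqref{Equ-def-Wronski} of the harmonic remainder, the admissible modes under the growth bound are a linear function, a constant, and the $\ln|y|$ mode. The $\ln|y|$ mode, the two-dimensional feature absent for $n\ge 3$, produces the coefficient $d$, while the linear and constant modes give $b\in\mathbb R^2$ and $c\in\mathbb R$. Undoing the change of variables sends $\ln|y|$ to $\tfrac12\ln(x^TPx)$ up to a constant that can be absorbed into $c$.

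Continuing the iteration on the decaying residue, the scheme of Lemma \ref{lem:iteration} extends past $\epsilon=1/2$ so long as the integer value $k_1=2$ in Lemma \ref{Lem-fastConverge} is avoided; other integer crossings contribute at most a single additional $\ln|x|$ factor, which is harmless inside the $O_{m+1}$ notation. Pushing the decay exponent up to $\min\{\zeta,3\}$ yields \eqref{equ:asymptotic-behav}: the cap at $\zeta$ comes from the pointwise bound on $D^2(f-f(\infty))$, while the cap at $3$ and the extra $\ln|x|$ factor precisely at $\zeta=3$ are forced by the integer-resonance case $k_1=3\in\mathbb N_*\setminus\{2\}$ of Lemma \ref{Lem-fastConverge}. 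For the refined expansion \eqref{equ:asymptotic-expan} under $\zeta>3$, apply one further round to $w_1:=u-(\tfrac12 x^TAx+bx+c)-d\ln(x^TPx)=O_{m+1}(|x|^{-1})$; the surviving spherical harmonic modes at order $|y|^{-1}$ are the first-degree harmonics $\cos\theta,\sin\theta$, producing the term $(x^TPx)^{-1/2}(d_1\cos\theta+d_2\sin\theta)$, with residue $O_m(|x|^{2-\zeta})$ for $\zeta<4$ or $O_m(|x|^{-2}\ln|x|)$ at the integer resonance $\zeta=4$.

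The main technical obstacle will be the careful bookkeeping of $\ln|x|$ factors across the successive integer-resonance crossings of Lemma \ref{Lem-fastConverge}. To prevent the log power $p_0$ from inflating indefinitely under iteration, I plan to choose $\epsilon'$ small enough that only boundedly many doublings are needed to reach any target exponent, and to peel off the closed-form logarithmic mode $d\ln(x^TPx)$ as soon as the harmonic-remainder analysis produces it, so that all subsequent iterations act on a genuinely polynomially-decaying remainder and the final $\ln$ powers match exactly those stated in \eqref{equ:asymptotic-behav} and \eqref{equ:asymptotic-expan}.
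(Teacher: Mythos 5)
Your proposal follows essentially the same route as the paper: iterate Lemma \ref{lem:iteration} to push the decay exponent of $w=u-\tfrac12 x^TAx$ into $(1/2,1)$, then pass to the linearized Poisson equation for $\widetilde w(y)=w(Qy)$ on an exterior domain, and peel off the linear, logarithmic, constant, and (for $\zeta>3$) first-degree decaying modes through repeated rounds of a particular solution from Lemmas \ref{Lem-fastConverge}--\ref{Lem-fastConverge-2} combined with the spherical harmonic expansion of the harmonic remainder, exactly as in the paper. A few small points to tighten: the rotation must be $Q=P^{-1/2}$, not $P^{1/2}$, since one needs $\Delta\widetilde w=\operatorname{tr}(Q^2D^2w(Qy))=D_{M_{ij}}F_\tau(A)D_{ij}w(Qy)$, and then $\ln|Q^{-1}x|=\tfrac12\ln(x^TPx)$; the cap at $\min\{\zeta,3\}$ in \eqref{equ:asymptotic-behav} is produced by the leading decaying exterior harmonic mode $|x|^{-1}$ rather than by a resonance in Lemma \ref{Lem-fastConverge}, with the $k_1=3$ resonance accounting only for the extra $\ln|x|$ at $\zeta=3$; similarly in \eqref{equ:asymptotic-expan} the $|x|^{-2}\ln|x|$ residue appears for every $\zeta\geq 4$ because the self-generated source $F_\tau\bigl(\lambda(D^2(\tfrac12 x^TAx+d\ln|x|))\bigr)-f(\infty)=O(|x|^{-4})$ always hits the $k_1=4$ resonance, not only at $\zeta=4$; and since $\zeta>2$, the case $k_1=2$ of Lemma \ref{Lem-fastConverge} never arises in the iteration, so there is nothing to avoid there.
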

\begin{proof}
  By Lemmas \ref{lem:WeakConvergence} and \ref{lem:Initial}, there exist $\alpha,\epsilon'>0$ such that \eqref{Chap6-Asy-Initial} holds. Let $p_1\in\mathbb N$ be the positive integer such that

  $$
  2^{p_1}\epsilon'<1\quad\text{and}\quad 1<2^{p_1+1}\epsilon'<2.
  $$
  (If necessary, we may choose $\epsilon'$ smaller to make both inequalities hold.)
  Let $\epsilon_1:=2^{p_1}\epsilon'$.
  Applying Lemma \ref{lem:iteration} $p_1$ times, we have
  \begin{equation}\label{Chap6-equ-converge5}
  \left\{ \begin{array} { l } { \left| D ^ { k } w ( x ) \right| \leq C | x | ^ { 2 - \epsilon_1 - k }  , } \\ { \frac { \left| D ^ { m + 1 } w \left( x _ { 1 } \right) - D ^ { m + 1 } w \left( x_ { 2 } \right) \right| } { \left| x _ { 1 } - x _ { 2 } \right| ^ { \alpha } } \leq C \left| x _ { 1 } \right| ^ { 1 - m - \epsilon_1 - \alpha},} \end{array} \right.
  \end{equation}
  for all  $ | x | > 2, k = 0 , \ldots , m + 1$ and $\left| x_ { 1 } \right| > 2,x _ { 2 } \in B _ { \left| x _ { 1} \right| / 2 } \left( x _ { 1 } \right)$.

  Now we consider the linearized equation again. Applying Newton--Leibnitz formula between Eq.~\eqref{Equ-SPL-perturb} and $F_{\tau}(\lambda(A))=f(\infty)$, we have $\widetilde a_{ij}D_{ij}w=f(x)-f(\infty)$ i.e.,

  $$
  \widetilde a_{ij}(\infty)D_{ij}w=f(x)-f(\infty)+(\widetilde a_{ij}(\infty)-\widetilde a_{ij}(x))D_{ij}w=:f_3(x)
  $$
  in $\mathbb R^2$,
  where the coefficients are uniformly elliptic and

  $$
  \widetilde a_{ij}(x)=\int_0^1D_{M_{ij}}F_{\tau}(A+tD^2w(x))dt\rightarrow \widetilde a_{ij}(\infty)=D_{M_{ij}}F_{\tau}(A)
  $$
  as $|x|\rightarrow\infty$. Let $Q:=[D_{M_{ij}}F(A)]^{\frac{1}{2}}$ and $\widetilde w(x):=w(Qx)$. By the invariance of trace under cyclic permutations again, we have
  \begin{equation}\label{equ-temp-3}
  \Delta\widetilde w=f_3(Qx)=:\widetilde f_3(x).
  \end{equation}
  By the definition of $\widetilde a_{ij}(x)$, condition \eqref{Low-Regular-Condition} on $f$ and \eqref{Chap6-equ-converge5} we have

  $$
  |\widetilde f_3(x)|\leq C|x|^{-2\epsilon_1}
  $$
  for some $C,R_1>0$ for all $|x|>2R_1$. By  Lemmas \ref{Lem-fastConverge}
and  \ref{Lem-fastConverge-2},  there exist a function $\widetilde h_3$ solving \eqref{equ-temp-3} in $\mathbb R^2\setminus\overline{B_{R_1}}$ with estimate

  $$
  |\widetilde h_3(x)|\leq C|x|^{2-2\epsilon_1}
  $$
  for some  $C>0$ for all $|x|>2R_1$. Thus $\widetilde w-\widetilde h_3$ is harmonic on an exterior domain of $\mathbb R^2$ with $\widetilde w-\widetilde h_3=O(|x|^{2-\epsilon_1})$ as $|x|\rightarrow\infty$. By spherical harmonic expansion as in \eqref{Equ-def-Wronski} or the proof of (2.31) in \cite{Bao-Li-Zhang-ExteriorBerns-MA}, there exist $\widetilde b\in\mathbb R^2$ and $\widetilde d_1,\widetilde d_2\in\mathbb R$ such that

  $$
  \widetilde w(x)-\widetilde h_3(x)=\widetilde b\cdot x+\widetilde d_1\ln|x|+\widetilde d_2+O(|x|^{-1})
  $$
as $|x|\rightarrow\infty$   and consequently

  $$
  \begin{array}{lllll}
  |\widetilde w(x)-\widetilde b\cdot x| &\leq & |\widetilde h_3(x)| +|\widetilde d_1\ln|x|+\widetilde d_2+O(|x|^{-1})|\\
  &=& O(|x|^{2-2\epsilon_1} ) +|\widetilde d_1\ln|x|+\widetilde d_2+O(|x|^{-1})|\\
  &=& O(|x|^{2-2\epsilon_1} )=o(|x|)\\
  \end{array}
  $$
  as $|x|\rightarrow\infty$.

  Let

  $$
  \widetilde w_1(x):=\widetilde w(x)-\widetilde b\cdot x.
  $$
  By interior estimates as used in Lemma \ref{lem:WeakConvergence}, we have

  $$
  |D^k\widetilde w_1(x)|\leq C|x|^{2-2\epsilon_1-k} ,
  $$
  for some $C>0$ for all $k=0,\cdots,m+1$ and $|x|>2R_1$.
  As in the process in obtaining \eqref{equ-temp-3}, $\widetilde w_1$ satisfies
  \begin{equation}\label{equ-temp-4}
  \Delta \widetilde w_1=\widetilde f_4(x)=O(|x|^{-\zeta})+O(|x|^{-4\epsilon_1}).
  \end{equation}
Since $\zeta>2$ and $4\epsilon_1\in (2,4)$,  Lemmas \ref{Lem-fastConverge}
and  \ref{Lem-fastConverge-2},  there exists a function $\widetilde h_4$ solving \eqref{equ-temp-4} in $\mathbb R^2\setminus\overline{B_{R_1}}$ with estimate

$$
|\widetilde h_4(x)|\leq
\left\{
\begin{array}{llll}
C|x|^{2-\zeta}(\ln|x|)+C|x|^{2-4\epsilon_1}, & \zeta\not\in\mathbb N_*,\\
C|x|^{2-\zeta}+C|x|^{2-4\epsilon_1}, & \zeta\in\mathbb N_*,\\
\end{array}
\right.
$$
for some $C>0$ for all $|x|>2R_1$. Thus $\widetilde w_1-\widetilde h_4$ is harmonic in $|x|>2R_1$ with $|\widetilde w_1-\widetilde h_4|=O(|x|^{2-2\epsilon_1})$. Since $2-2\epsilon_1<1$, by spherical harmonic expansion, there exist $\widetilde d,\widetilde d_3\in\mathbb R$ such that

$$
\widetilde w_1(x)-\widetilde h_4(x)=\widetilde d\ln|x|+\widetilde d_3+O(|x|^{-1})
$$
as $|x|\rightarrow\infty$ and consequently,
\begin{equation}\label{equ-temp-6}
\begin{array}{llllll}
  |\widetilde w_1(x)-\widetilde d\ln|x|| & \leq & |\widetilde h_4(x)|+\widetilde d_3+O(|x|^{-1})\\
  &=& \left\{
  \begin{array}{lll}
    O(|x|^{2-\zeta}), & \text{if }\zeta<4\epsilon_1, & \text{and }\zeta\not=3\\
    O(|x|^{2-\zeta}(\ln|x|)), & \text{if }\zeta<4\epsilon_1, & \text{and }\zeta=3\\
    O(|x|^{2-4\epsilon_1}), & \text{if }\zeta\geq 4\epsilon_1.\\
  \end{array}
  \right.\\
\end{array}
\end{equation}
Again, we follow the process in obtaining \eqref{equ-temp-3}.

 Since

$$
F_{\tau}(\lambda(D^2(\frac{1}{2}x^TAx+d\ln|x|)))=f(\infty)+O(|x|^{-4})
$$
as $|x|\rightarrow\infty$, we set

$$
\widetilde w_2(x):=\widetilde w_1(x)-\widetilde d\ln|x|,
$$
which satisfies

$$
  \Delta \widetilde w_2=O(|x|^{-\zeta})+O(|x|^{-4})+\left\{
  \begin{array}{lll}
    O(|x|^{-2\zeta}), & \text{if }\zeta<4\epsilon_1, & \text{and }\zeta\not=3\\
    O(|x|^{-2\zeta}(\ln|x|)^2), & \text{if }\zeta<4\epsilon_1, & \text{and }\zeta=3\\
    O(|x|^{-8\epsilon_1}), & \text{if }\zeta\geq 4\epsilon_1.\\
  \end{array}
  \right.
$$
Since $8\epsilon_1\in(4,8)$, we have

$$
\Delta \widetilde w_2=O(|x|^{-\zeta})+O(|x|^{-4}).
$$
By Lemmas \ref{Lem-fastConverge} and \ref{Lem-fastConverge-2}, we have a solution $\widetilde h_5$ on exterior domain with estimate

$$
|\widetilde h_5(x)|\leq
\left\{
\begin{array}{llll}
C|x|^{2-\zeta}+C|x|^{-2}(\ln|x|), & \text{if }\zeta\not\in\mathbb N_*,\\
C|x|^{2-\zeta}(\ln|x|)+C|x|^{-2}(\ln|x|), & \text{if }\zeta\in\mathbb N_*,\\
\end{array}
\right.
$$
for some $C>0$ for all $|x|>2R_1$. Together with \eqref{equ-temp-6},  by spherical harmonic expansion we have  $\widetilde c\in\mathbb R$ such that
\begin{equation}\label{equ-temp-7}
\widetilde w_2(x)-\widetilde h_5(x)=\widetilde c+O(|x|^{-1}).
\end{equation}
Rotating back by $Q^{-1}$ matrix, since $P=Q^{-2}$,  we have $\beta\in\mathbb R^2, c,d\in\mathbb R$ such that

$$
\begin{array}{llll}
&\displaystyle \left|
u(x)-\left(\frac{1}{2}x^TAx+\beta x+d\ln(x^TPx)+c\right)
\right|\\
\leq&  C|\widetilde w_2-\widetilde c|\\
\leq & C|\widetilde h_5(Q^{-1}x)|+C|x|^{-1}\\
\leq &
\left\{
\begin{array}{llll}
C|x|^{2-\zeta}+C|x|^{-1},& \text{if }\zeta\not=3,\\
C|x|^{2-\zeta}(\ln|x|)+C|x|^{-1},& \text{if }\zeta=3,\\
\end{array}
\right.\\
\end{array}
$$
for some $C>0$ for sufficiently large $|x|$. Estimates for higher order
derivatives follow similarly as in Lemma \ref{lem:Initial}.  The second equality in \eqref{equ:def-matrixQ} can be obtained in (1.4) of \cite{Liu-Bao-2021-Dim2}.
This finishes the proof of \eqref{equ:asymptotic-behav}.

It remains to prove that when $\zeta>3$, we have \eqref{equ:asymptotic-expan} at infinity. In fact from \eqref{equ-temp-7}, we iterate once more by setting
$\widetilde w_3:=\widetilde w_2(x)-\widetilde c$, which satisfies

$$
\Delta\widetilde w_3=O(|x|^{-\zeta})+O(|x|^{-4})\quad\text{as }|x|\rightarrow\infty.
$$
By Lemmas \ref{Lem-fastConverge} and \ref{Lem-fastConverge-2}, we have a solution $\widetilde h_6$ on exterior domain with estimate

$$
|\widetilde h_6(x)|\leq
\left\{
\begin{array}{lllll}
C|x|^{2-\zeta}+C|x|^{-2}(\ln|x|), & \text{if }\zeta\not\in\mathbb N_*,\\
C|x|^{2-\zeta}(\ln|x|)+C|x|^{-2}(\ln|x|), & \text{if }\zeta\in\mathbb N_*,\\
\end{array}
\right.
$$
for some $C>0$ in $|x|>2R_1$. Since $\widetilde w_3-\widetilde h_6$ is harmonic on an exterior domain of $\mathbb R^2$ and satisfies $|\widetilde w_3-\widetilde h_6|
=O(|x|^{-1})
$ as $|x|\rightarrow\infty$, by spherical harmonic expansion there exists $\widetilde d_4,\widetilde d_5\in\mathbb R$ such that

$$
\widetilde w_3-\widetilde h_6=\widetilde d_4\cos\theta |x|^{-1}+\widetilde d_5 \sin\theta |x|^{-1}+O(|x|^{-2})
$$
as $|x|\rightarrow\infty$, where $\theta=\frac{x}{|x|}$ here. By rotating back through $Q^{-1}$, we have the 0-order estimates in \eqref{equ:asymptotic-expan} since $P=Q^{-2}$. For higher order derivatives, the result follows from interior estimate as in \cite{Liu-Bao-2021-Expansion-LagMeanC}.
\end{proof} 

\small

\bibliographystyle{plain}

\bibliography{C:/Bib/Thesis}

\begin{thebibliography}{10}

\bibitem{Bao-Chen-Guan-Ji--HessianQuotient}
Jiguang Bao, Jingyi Chen, Bo~Guan, and Min Ji.
\newblock Liouville property and regularity of a {H}essian quotient equation.
\newblock {\em Amer. J. Math.}, 125(2):301--316, 2003.

\bibitem{Bao-Li-Zhang-ExteriorBerns-MA}
Jiguang Bao, Haigang Li, and Lei Zhang.
\newblock Monge-{A}mp\`ere equation on exterior domains.
\newblock {\em Calc. Var. Partial Differential Equations}, 52(1-2):39--63,
  2015.

\bibitem{Bhattacharya-HessianEstiamte-LagMeanCur}
Arunima Bhattacharya.
\newblock Hessian estimates for {L}agrangian mean curvature equation.
\newblock {\em Calc. Var. Partial Differential Equations}, 60(6):Paper No. 224,
  2021.

\bibitem{Bhattacharya-Shankar-OptimalRegularity-LagMeanCur}
Arunima Bhattacharya and Ravi Shankar.
\newblock Optimal regularity for {L}agrangian mean curvature type equations.
\newblock {\em arXiv. 2009.04613}, 2020.

\bibitem{Bhattacharya-Shankar-RegularityLagrangiaMC}
Arunima Bhattacharya and Ravi Shankar.
\newblock Regularity for convex viscosity solutions of {L}agrangian mean
  curvature equation.
\newblock {\em arXiv. 2006.02030}, 2020.

\bibitem{Caffarelli-InteriorEstimates-MA}
Luis Caffarelli.
\newblock Interior {$W^{2,p}$} estimates for solutions of the
  {M}onge-{A}mp\`ere equation.
\newblock {\em Ann. of Math. (2)}, 131(1):135--150, 1990.

\bibitem{Book-Caffarelli-Cabre-FullyNonlinear}
Luis Caffarelli and Xavier Cabr\'{e}.
\newblock {\em Fully nonlinear elliptic equations}, volume~43 of {\em American
  Mathematical Society Colloquium Publications}.
\newblock American Mathematical Society, Providence, RI, 1995.

\bibitem{Caffarelli-Gidas-Spruck-CGS}
Luis Caffarelli, Basilis Gidas, and Joel Spruck.
\newblock Asymptotic symmetry and local behavior of semilinear elliptic
  equations with critical {S}obolev growth.
\newblock {\em Comm. Pure Appl. Math.}, 42(3):271--297, 1989.

\bibitem{Caffarelli-Li-ExtensionJCP}
Luis Caffarelli and Yanyan Li.
\newblock An extension to a theorem of {J}\"{o}rgens, {C}alabi, and
  {P}ogorelov.
\newblock {\em Comm. Pure Appl. Math.}, 56(5):549--583, 2003.

\bibitem{Caffarelli-Li-Liouville-MA-periodic}
Luis Caffarelli and Yanyan Li.
\newblock A {L}iouville theorem for solutions of the {M}onge-{A}mp\`ere
  equation with periodic data.
\newblock {\em Ann. Inst. H. Poincar\'{e} Anal. Non Lin\'{e}aire},
  21(1):97--120, 2004.

\bibitem{Calabi}
Eugenio Calabi.
\newblock Improper affine hyperspheres of convex type and a generalization of a
  theorem by {K}. {J}\"{o}rgens.
\newblock {\em Michigan Math. J.}, 5:105--126, 1958.

\bibitem{Chang-Yuan-LiouvillSigma2}
Sun-Yung~Alice Chang and Yu~Yuan.
\newblock A {L}iouville problem for the sigma-2 equation.
\newblock {\em Discrete Contin. Dyn. Syst.}, 28(2):659--664, 2010.

\bibitem{Chen-Shankar-Yuan-RegularitySPL}
Jingyi Chen, Ravi Shankar, and Yu~Yuan.
\newblock Regularity for convex viscosity solutions of special {L}agrangian
  equation.
\newblock {\em arXiv. 1911.05452}, 2019.

\bibitem{Chen-Xiang-Rigidity2Hessian}
Li~Chen and Ni~Xiang.
\newblock Rigidity theorems for the entire solutions of 2-{H}essian equation.
\newblock {\em J. Differential Equations}, 267(9):5202--5219, 2019.

\bibitem{ChengandYau}
Shiu~Yuen Cheng and Shing-Tung Yau.
\newblock Complete affine hypersurfaces. {I}. {T}he completeness of affine
  metrics.
\newblock {\em Comm. Pure Appl. Math.}, 39(6):839--866, 1986.

\bibitem{Du-NecessaryandSufficient-HessianQuotient}
Shi-Zhong Du.
\newblock Necessary and sufficient conditions to {B}ernstein theorem of a
  {H}essian equation.
\newblock {\em arXiv. 2106.06211}, 2021.

\bibitem{Flanders}
Harley Flanders.
\newblock On certain functions with positive definite {H}essian.
\newblock {\em Ann. of Math. (2)}, 71:153--156, 1960.

\bibitem{Fu-Bernstein-SPL}
Lei Fu.
\newblock An analogue of {B}ernstein's theorem.
\newblock {\em Houston J. Math.}, 24(3):415--419, 1998.

\bibitem{Book-Gilbarg-Trudinger}
David Gilbarg and Neil~S. Trudinger.
\newblock {\em Elliptic partial differential equations of second order}.
\newblock Classics in Mathematics. Springer-Verlag, Berlin, 2001.
\newblock Reprint of the 1998 edition.

\bibitem{Gunther-ConformalNormalCoord}
Matthias G\"{u}nther.
\newblock Conformal normal coordinates.
\newblock {\em Ann. Global Anal. Geom.}, 11(2):173--184, 1993.

\bibitem{Han-Li-Li-AsymExpan-Yamabe}
Qing Han, Xiaoxiao Li, and Yichao Li.
\newblock Asymptotic expansions of solutions of the {Y}amabe equation and the
  {$\sigma_k$}-{Y}amabe equation near isolated singular points.
\newblock {\em Comm. Pure Appl. Math.}, 74(9):1915--1970, 2021.

\bibitem{Han-Li-Teixeira-AsymBeha-kYamabe}
Zheng-Chao Han, Yanyan Li, and Eduardo~V. Teixeira.
\newblock Asymptotic behavior of solutions to the {$\sigma_k$}-{Y}amabe
  equation near isolated singularities.
\newblock {\em Invent. Math.}, 182(3):635--684, 2010.

\bibitem{Hong-Remark-MA}
Guanghao Hong.
\newblock A {R}emark on {M}onge-{A}mp\`ere equation over exterior domains.
\newblock {\em arXiv. 2007.12479}, 2020.

\bibitem{Huang-Wang-SelfShrinkingSol}
Rongli Huang and Zhizhang Wang.
\newblock On the entire self-shrinking solutions to {L}agrangian mean curvature
  flow.
\newblock {\em Calc. Var. Partial Differential Equations}, 41(3-4):321--339,
  2011.

\bibitem{Jia-Xiaobiao-AsymGeneralFully}
Xiaobiao Jia.
\newblock Asymptotic behavior of solutions of fully nonlinear equations over
  exterior domains.
\newblock {\em C. R. Math. Acad. Sci. Paris}, 358(11-12):1187--1197, 2020.

\bibitem{Jorgens}
Konrad J\"{o}rgens.
\newblock \"{U}ber die {L}\"{o}sungen der {D}ifferentialgleichung {$rt-s^2=1$}.
\newblock {\em Math. Ann.}, 127:130--134, 1954.

\bibitem{JostandXin}
J\"{u}rgen Jost and Yuan~Long Xin.
\newblock Some aspects of the global geometry of entire space-like
  submanifolds.
\newblock volume~40, pages 233--245. 2001.
\newblock Dedicated to Shiing-Shen Chern on his 90th birthday.

\bibitem{KMPS-RefinedAsymptotics}
Nick Korevaar, Rafe Mazzeo, Frank Pacard, and Richard Schoen.
\newblock Refined asymptotics for constant scalar curvature metrics with
  isolated singularities.
\newblock {\em Invent. Math.}, 135(2):233--272, 1999.

\bibitem{Book-Li-Xu-Simon-Jia-MA}
An-Min Li, Ruiwei Xu, Udo Simon, and Fang Jia.
\newblock {\em Affine {B}ernstein problems and {M}onge-{A}mp\`ere equations}.
\newblock World Scientific Publishing Co. Pte. Ltd., Hackensack, NJ, 2010.

\bibitem{Li-Li-Yuan-BernsteinThm}
Dongsheng Li, Zhisu Li, and Yu~Yuan.
\newblock A {B}ernstein problem for special {L}agrangian equations in exterior
  domains.
\newblock {\em Adv. Math.}, 361:106927, 29, 2020.

\bibitem{Li-Ren-Wang-InteriRigidty}
Ming Li, Changyu Ren, and Zhizhang Wang.
\newblock An interior estimate for convex solutions and a rigidity theorem.
\newblock {\em J. Funct. Anal.}, 270(7):2691--2714, 2016.

\bibitem{Liu-Bao-2021-Dim2}
Zixiao Liu and Jiguang Bao.
\newblock Asymptotic expansion and optimal symmetry of minimal gradient graph
  equations in dimension 2.
\newblock {\em Communications in Contemporary Mathematics}, 0(0):2150110, 0.

\bibitem{Liu-Bao-2021-Expansion-LagMeanC}
Zixiao Liu and Jiguang Bao.
\newblock Asymptotic expansion at infinity of solutions of {M}onge-{A}mp\`ere
  type equations.
\newblock {\em Nonlinear Anal.}, 212:Paper No. 112450, 17, 2021.

\bibitem{Liu-Bao-2020-ExpansionSPL}
Zixiao Liu and Jiguang Bao.
\newblock Asymptotic {E}xpansion at {I}nfinity of {S}olutions of {S}pecial
  {L}agrangian {E}quations.
\newblock {\em J. Geom. Anal.}, 32(3):90, 2022.

\bibitem{Pogorelov}
Aleksei~Vasil'evich Pogorelov.
\newblock On the improper convex affine hyperspheres.
\newblock {\em Geometriae Dedicata}, 1(1):33--46, 1972.

\bibitem{Teixeira-Zhang-Liouville-MA-AsymptoticPeriodic}
Eduardo~V. Teixeira and Lei Zhang.
\newblock Global {M}onge-{A}mp\'{e}re equation with asymptotically periodic
  data.
\newblock {\em Indiana Univ. Math. J.}, 65(2):399--422, 2016.

\bibitem{Chong-Rongli-Bao-SecondBoundary-SPL}
Chong Wang, Rongli Huang, and Jiguang Bao.
\newblock On the second boundary value problem for {L}agrangian mean curvature
  equation.
\newblock {\em arXiv:1808.01139}, 2018.

\bibitem{Wang-Yuan-SuperCritical}
Dake Wang and Yu~Yuan.
\newblock Hessian estimates for special {L}agrangian equations with critical
  and supercritical phases in general dimensions.
\newblock {\em Amer. J. Math.}, 136(2):481--499, 2014.

\bibitem{Warren-Calibrations-MA}
Micah Warren.
\newblock Calibrations associated to {M}onge-{A}mp\`ere equations.
\newblock {\em Trans. Amer. Math. Soc.}, 362(8):3947--3962, 2010.

\bibitem{Warren-Yuan-HessianEstimate-Sigma2}
Micah Warren and Yu~Yuan.
\newblock Hessian estimates for the sigma-2 equation in dimension 3.
\newblock {\em Communications on Pure and Applied Mathematics}, 62(3):305--321,
  2009.

\bibitem{Warren-Yuan-LargePhase}
Micah Warren and Yu~Yuan.
\newblock Hessian and gradient estimates for three dimensional special
  {L}agrangian equations with large phase.
\newblock {\em Amer. J. Math.}, 132(3):751--770, 2010.

\bibitem{Min-Extension-ConvexFunc}
Min Yan.
\newblock Extension of convex function.
\newblock {\em J. Convex Anal.}, 21(4):965--987, 2014.

\bibitem{Yuan-Bernstein-SPL}
Yu~Yuan.
\newblock A {B}ernstein problem for special {L}agrangian equations.
\newblock {\em Invent. Math.}, 150(1):117--125, 2002.

\bibitem{Yuan-GlobalSolution-SPL}
Yu~Yuan.
\newblock Global solutions to special {L}agrangian equations.
\newblock {\em Proc. Amer. Math. Soc.}, 134(5):1355--1358, 2006.

\end{thebibliography}

\bigskip

\noindent Z. Liu \& J. Bao

\medskip

\noindent  School of Mathematical Sciences, Beijing Normal University\\
Laboratory of Mathematics and Complex Systems, Ministry of Education\\
Beijing 100875, China \\[1mm]
Email: \textsf{liuzixiao@mail.bnu.edu.cn, jgbao@bnu.edu.cn}

\end{document}